\documentclass[11pt]{article}
\usepackage{amssymb,amsmath,amsfonts,amsthm,mathrsfs}
\usepackage{graphicx,graphics,psfrag,epsfig,calrsfs,cancel}
\usepackage[colorlinks=true, pdfstartview=FitV, linkcolor=blue, citecolor=red, urlcolor=blue]{hyperref}

\usepackage{caption,subfig,color}
\usepackage{tikz}
\usetikzlibrary{calc}

\usepackage{geometry}
\geometry{hmargin=2cm,vmargin=3cm}

\newtheorem{assumption}{Assumption}
\newtheorem{remark}{Remark}
\newtheorem{lemma}{Lemma}
\newtheorem{proposition}{Proposition}
\newtheorem{theorem}{Theorem}

\newtheorem{definition}{Definition}

\newcommand{\N}{{\mathbb N}}
\newcommand{\Z}{{\mathbb Z}}
\newcommand{\R}{{\mathbb R}}
\newcommand{\C}{{\mathbb C}}
\newcommand{\T}{{\mathbb T}}

\newcommand{\LL}{{\mathbb L}}
\newcommand{\M}{{\mathbb M}}
\newcommand{\D}{{\mathbb D}}
\newcommand{\Dbar}{\overline{\mathbb D}}
\newcommand{\U}{{\mathcal U}}
\newcommand{\Ubar}{\overline{\mathcal U}}
\newcommand{\cercle}{{\mathbb S}^1}

\newcommand{\dps}{\displaystyle}
\newcommand{\Ng}{| \! | \! |}
\newcommand{\Nd}{| \! | \! |}

\begin{document}

\title{The Leray-G{\aa}rding method\\
for finite difference schemes}

\author{Jean-Fran\c{c}ois {\sc Coulombel}\thanks{CNRS and Universit\'e de Nantes, Laboratoire de Math\'ematiques 
Jean Leray (UMR CNRS 6629), 2 rue de la Houssini\`ere, BP 92208, 44322 Nantes Cedex 3, France. Email: 
{\tt jean-francois.coulombel@univ-nantes.fr}. Research of the author was supported by ANR project BoND, 
ANR-13-BS01-0009-01.}}
\date{\today}
\maketitle

\begin{abstract}
In \cite{leray} and \cite{garding}, {\sc Leray} and {\sc G{\aa}rding} have developed a multiplier technique for 
deriving a priori estimates for solutions to scalar hyperbolic equations in either the whole space or the torus. 
In particular, the arguments in \cite{leray,garding} provide with at least one {\it local} multiplier and one 
{\it local} energy functional that is controlled along the evolution. The existence of such a local multiplier 
is the starting point of the argument by {\sc Rauch} in \cite{rauch} for the derivation of semigroup estimates 
for hyperbolic initial boundary value problems. In this article, we explain how this multiplier technique can be 
adapted to the framework of finite difference approximations of transport equations. The technique applies to 
numerical schemes with arbitrarily many time levels, and encompasses a somehow magical trick that has been 
known for a long time for the leap-frog scheme. More importantly, the existence and properties of the local 
multiplier enable us to derive {\it optimal} semigroup estimates for fully discrete hyperbolic initial boundary 
value problems, which answers a problem raised by {\sc Trefethen}, {\sc Kreiss} and {\sc Wu} 
\cite{trefethen3,kreiss-wu}.
\end{abstract}
\bigskip

\noindent {\small {\bf AMS classification:} 65M06, 65M12, 35L03, 35L04.}

\noindent {\small {\bf Keywords:} hyperbolic equations, difference approximations, stability, boundary conditions, 
semigroup.}
\bigskip
\bigskip

%\tableofcontents

Throughout this article, we use the notation
\begin{align*}
&\U := \{\zeta \in \C,|\zeta|>1 \}\, ,\quad \Ubar := \{\zeta \in \C,|\zeta| \ge 1 \}\, ,\\
&\D := \{\zeta \in \C,|\zeta|<1 \}\, ,\quad \cercle := \{\zeta \in \C,|\zeta|=1 \} \, ,\quad \Dbar := \D \cup \cercle \, .
\end{align*}
We let ${\mathcal M}_n ({\mathbb K})$ denote the set of $n \times N$ matrices with entries in ${\mathbb K} 
= \R \text{ or } \C$. If $M \in {\mathcal M}_n (\C)$, $M^*$ denotes the conjugate transpose of $M$. We let $I$ 
denote the identity matrix or the identity operator when it acts on an infinite dimensional space. We use the 
same notation $x^* \, y$ for the Hermitian product of two vectors $x,y \in \C^n$ and for the Euclidean product 
of two vectors $x,y \in \R^n$. The norm of a vector $x \in \C^n$ is $|x| := (x^* \, x)^{1/2}$. The induced matrix 
norm on ${\mathcal M}_n (\C)$ is denoted $\| \cdot \|$.

The letter $C$ denotes a constant that may vary from line to line or within the same line. The dependence of the 
constants on the various parameters is made precise throughout the text.

In what follows, we let $d \ge 1$ denote a fixed integer, which will stand for the dimension of the space domain 
we are considering. We shall also use the space $\ell^2$ of square integrable sequences. Sequences may be 
valued in $\C^k$ for some integer $k$. Some sequences will be indexed by $\Z^{d-1}$ while some will be indexed 
by $\Z^d$ or a subset of $\Z^d$. We thus introduce some specific notation for the norms. Let $\Delta x_i>0$ for 
$i=1,\dots,d$ be $d$ space steps. We shall make use of the $\ell^2(\Z^{d-1})$-norm that we define as follows: for 
all $v \in \ell^2 (\Z^{d-1})$,
\begin{equation*}
\| v \|_{\ell^2(\Z^{d-1})}^2 := \left( \prod_{k=2}^d \Delta x_k \right) \, \sum_{i=2}^d \sum_{j_i\in \Z} |v_{j_2,\dots,j_d}|^2 \, .
\end{equation*}
The corresponding scalar product is denoted $\langle \cdot,\cdot \rangle_{\ell^2(\Z^{d-1})}$. Then for all integers 
$m_1 \le m_2$, we set
\begin{equation*}
\Ng u \Nd_{m_1,m_2}^2 := \Delta x_1 \, \sum_{j_1=m_1}^{m_2} \|u_{j_1,\cdot} \|_{\ell^2(\Z^{d-1})}^2 \, ,
\end{equation*}
to denote the $\ell^2$-norm on the set $[m_1,m_2] \times \Z^{d-1}$ ($m_1$ may equal $-\infty$ and $m_2$ 
may equal $+\infty$). The corresponding scalar product is denoted $\langle \cdot,\cdot \rangle_{m_1,m_2}$. 
Other notation throughout the text is meant to be self-explanatory.

\section{Introduction}
\label{intro}

\subsection{Some motivations and a brief reminder}

The ultimate goal of this article is to derive semigroup estimates for finite difference approximations of 
hyperbolic initial boundary value problems. Up to now, the only available general stability theory for such 
numerical schemes is due to {\sc Gustafsson}, {\sc Kreiss} and {\sc Sundstr{\"o}m} \cite{gks}. It relies on 
a Laplace transform with respect to the time variable, and the corresponding stability estimates are thereby 
restricted to zero initial data. A long standing problem in this line of research is, starting from the GKS stability 
estimates, which are {\it resolvent} type estimates, to incorporate nonzero initial data and to derive {\it semigroup} 
estimates, see, e.g., the discussion in \cite[section 4]{trefethen3}. This problem is delicate for the following 
reason: the validity of the GKS stability estimate is known to be equivalent to a {\it slightly stronger version} 
of the resolvent estimate
\begin{equation}
\label{resolventkreiss}
\sup_{z \in \U} \, (|z|-1) \, \| (z\, I-T)^{-1} \|_{{\mathcal L}(\ell^2(\N))} <+\infty \, ,
\end{equation}
where $T$ is some bounded operator on $\ell^2(\N)$ that incorporates both the discretization of the hyperbolic 
equation and the numerical boundary conditions. Deriving an optimal semigroup estimate amounts to showing 
that $T$ is power bounded. In finite dimension, the equivalence between power boundedness of $T$ and the 
resolvent condition \eqref{resolventkreiss} is known as the {\sc Kreiss} matrix Theorem, but the analogous 
equivalence is known to fail in general in infinite dimension. Worse, even the strong resolvent condition
\begin{equation*}
\sup_{n \ge 1} \, \sup_{z \in \U} \, (|z|-1)^n \, \| (z\, I-T)^{-n} \|_{{\mathcal L}(\ell^2(\N))} <+\infty \, ,
\end{equation*}
does not imply in general that $T$ is power bounded, see, e.g., the review \cite{strikwerda-wade} or \cite{TE} 
for details and historical comments.

Optimal semigroup estimates have nevertheless been derived for some discretized hyperbolic initial boundary 
value problems. More specifically, the first general derivation of semigroup estimates is due to {\sc Wu} \cite{wu}, 
whose analysis deals with numerical schemes with two time levels and scalar equations. The results in 
\cite{wu} were extended by {\sc Gloria} and the author in \cite{jfcag} to systems in arbitrary space dimension, 
but the arguments in \cite{jfcag} are still restricted to numerical schemes with two time levels. The present article 
gives, as far as we are aware of, the first systematic derivation of semigroup estimates for fully discrete hyperbolic 
initial boundary value problems in the case of numerical schemes with arbitrarily many time levels. It generalizes 
the arguments of \cite{wu,jfcag} and provides new insight for the construction of \lq \lq dissipative\rq \rq $\,$ 
numerical boundary conditions for discretized evolution equations. Let us observe that the leap-frog scheme, 
with some specific boundary conditions, has been dealt with by {\sc Thomas} \cite{thomas} by using a 
{\it multiplier} technique. It is precisely this technique which we aim at developing in a systematic fashion 
for numerical schemes with arbitrarily many time levels. In particular, we shall explain why the somehow 
magical multiplier $u_j^{n+2}+u_j^n$ for the leap-frog scheme, see, e.g., \cite{RM}, follows from a general 
theory that is the analogue of the {\sc Leray}-{\sc G{\aa}rding} method for partial differential equations, 
which we briefly recall now.
\bigskip

The method by {\sc Leray} and {\sc G{\aa}rding} \cite{leray,garding} provides with suitable multipliers 
for scalar hyperbolic operators of arbitrary order. Namely, given an integer $m \ge 0$, we consider a 
partial differential operator of the form
\begin{equation*}
L :=\partial_t^{m+1} +\sum_{k=1}^{m+1} P_k(\partial_x) \, \partial_t^{m+1-k} \, ,
\end{equation*}
where $t \in \R$ stands for the time variable, $x \in \R^d$ stands for the space variable\footnote{The periodic 
case $x \in \T^d$ can be dealt with in a similar way and is actually the one considered in \cite{garding}.}, and 
each operator $P_k(\partial_x)$ is a linear combination of spatial partial derivatives of order $k$:
\begin{equation*}
P_k(\partial_x) =\sum_{|\alpha|=k} p_{k,\alpha} \, \partial_x^\alpha \, ,\quad 
\partial_x^\alpha := \partial_{x_1}^{\alpha_1} \cdots \partial_{x_d}^{\alpha_d} \, ,\quad 
|\alpha| :=\alpha_1 +\cdots +\alpha_d \, .
\end{equation*}
In the above formula, the $p_{k,\alpha}$'s are real numbers\footnote{We restrict here for simplicity to 
linear operators with constant coefficients.}. Well-posedness of the Cauchy problem
\begin{equation}
\label{cauchyedp}
L \, u =0 \, ,\quad (u,\partial_t u,\dots,\partial_t^m u)|_{t=0} =(u_0,u_1,\dots,u_m) \, ,
\end{equation}
in Sobolev spaces is known to be linked with {\it hyperbolicity} of $L$. Namely, if $L$ is strictly hyperbolic, 
meaning that for all $\xi \in \R^d \setminus \{ 0\}$, the (homogeneous) polynomial
\begin{equation}
\label{symboledp}
P(\tau,\xi) :=\tau^{m+1} +\sum_{k=1}^{m+1} P_k(i \, \xi) \, \tau^{m+1-k} \, ,\quad 
P_k(i \, \xi) :=i^k \, \sum_{|\alpha|=k} p_{k,\alpha} \, \xi^\alpha \, ,
\end{equation}
has $m+1$ simple purely imaginary roots with respect to $\tau$, then the Cauchy problem \eqref{cauchyedp} 
is well-posed in $H^m(\R^d) \times \cdots \times L^2(\R^d)$. In particular, there exists a constant $C>0$, that 
is independent of the solution $u$ and the initial data $u_0,u_1,\dots,u_m$, such that there holds:
\begin{equation}
\label{estimedp}
\sup_{t \in \R} \, \, \sum_{k=0}^m \| \partial_t^k u(t) \|_{H^{m-k}(\R^d)} \le C \, 
\sum_{k=0}^m \| u_k \|_{H^{m-k}(\R^d)} \, .
\end{equation}

The method by {\sc Leray} and {\sc G{\aa}rding} gives a quick and elegant way to derive the estimate 
\eqref{estimedp} assuming that the solution $u$ to \eqref{cauchyedp} is sufficiently smooth. By standard 
duality arguments, the validity of the a priori estimate \eqref{estimedp} yields well-posedness -meaning 
existence, uniqueness and continuous dependence on the data- for \eqref{cauchyedp}. Hence the main 
point is to prove \eqref{estimedp} assuming that $u$ is sufficiently smooth and decaying at infinity so that 
all integration by parts arising in the computations are legitimate. The main idea is to find a suitable quantity 
$M \, u$, which we call a multiplier and that will be linear with respect to $u$, such that when integrating the 
quantity $0=(M \, u) \, (L \, u)$ on the slab $[0,T] \times \R^d$, one gets the estimate \eqref{estimedp} {\it for 
free} (negative times are obtained by changing $t \rightarrow -t$). Following \cite[Chapter VI]{leray} and 
\cite[Section 3]{garding}, one possible choice of a multiplier is given by $L' \, u$ where $L'$ stands for the 
partial differential operator of order $m$ whose symbol is $\partial_\tau P$, with $P$ given in \eqref{symboledp}. 
Why $L' \, u$ is a good multiplier is justified in \cite{leray,garding}. A well-known particular case is the choice 
of $2\, \partial_t u$ as a multiplier for the wave equation. Here $P(\tau,\xi)=\tau^2 +|\xi|^2$ and therefore 
$\partial_\tau P =2\, \tau$, hence the choice $2\, \partial_t u$. The latter quantity is indeed a suitable multiplier 
for the wave operator because of the formula\footnote{We refer to \cite[page 74]{garding} for the generalization 
of such "integration by parts" formula to partial derivatives of higher order.}:
\begin{equation*}
2\, \partial_t u \, (\partial_t^2 u -\Delta_x u) =\partial_t \Big( (\partial_t u)^2 +\sum_{j=1}^d (\partial_{x_j} u)^2 
\Big) -2 \, \text{\rm div}_x \big( \partial_t u \, \nabla_x u \big) \, .
\end{equation*}
The important fact here is that the {\it energy}:
\begin{equation*}
(\partial_t u)^2 +\sum_{j=1}^d (\partial_{x_j} u)^2 \, ,
\end{equation*}
is a positive definite quadratic form of the first order partial derivatives of $u$. Let us observe that the multiplier 
$L' \, u$ is {\it local}, meaning that its pointwise value at $(t,x)$ only depends on $u$ in a neighborhood of $(t,x)$. 
This is important in view of using this multiplier in the study of initial boundary value problems. Another important 
remark is that the above energy is also {\it local}, and the arguments in \cite{leray,garding} show that this property 
is not specific to the wave operator. The fact that both the multiplier and the energy are local is crucial in the 
arguments of \cite[Lemma 1]{rauch}. In our framework of discretized equations, the multiplier will be local but 
the energy will not necessarily be so. We shall not exactly follow the arguments of \cite{rauch} which use time 
reversibility, but rather construct dissipative boundary conditions which will yield the optimal semigroup estimate 
we are aiming at.

\subsection{The main result}

We first set a few notations. We let $\Delta x_1,\dots,\Delta x_d,\Delta t>0$ denote space and time steps 
where the ratios, the so-called {\sc Courant-Friedrichs-Lewy} parameters, $\lambda_i :=\Delta t/\Delta x_i$, 
$i=1,\dots,d$, are fixed positive constants. We keep $\Delta t \in (0,1]$ as a small parameter and let the 
space steps $\Delta x_1,\dots,\Delta x_d$ vary accordingly. The $\ell^2$-norms with respect to the space 
variables have been previously defined and thus depend on $\Delta t$ and the CFL parameters through 
the mesh volume ($\Delta x_2 \cdots \Delta x_d$ on $\Z^{d-1}$, and $\Delta x_1 \cdots \Delta x_d$ on 
$\Z^d$). We always identify a sequence $w$ indexed by either $\N$ (for time), $\Z^{d-1}$ or $\Z^d$ (for 
space), with the corresponding step function. In particular, we shall feel free to take Fourier or Laplace 
transforms of such sequences.

For all $j\in \Z^d$, we set $j=(j_1,j')$ with $j':=(j_2,\dots,j_d) \in \Z^{d-1}$. We let $p,q,r \in \N^d$ denote some 
fixed multi-integers, and define $p_1,q_1,r_1$, $p',q',r'$ according to the above notation. We also let $s \in \N$ 
denote some fixed integer. We consider a recurrence relation of the form:
\begin{equation}
\label{numibvp}
\begin{cases}
{\dps \sum_{\sigma=0}^{s+1}} Q_\sigma \, u_j^{n+\sigma} =\Delta t \, F_j^{n+s+1} \, ,& 
j \in \Z^d \, ,\quad j_1 \ge 1\, ,\quad n\ge 0 \, ,\\
u_j^{n+s+1} +{\dps \sum_{\sigma=0}^{s+1}} B_{j_1,\sigma} \, u_{1,j'}^{n+\sigma} =g_j^{n+s+1} \, ,& 
j \in \Z^d \, ,\quad j_1=1-r_1,\dots,0\, ,\quad n\ge 0 \, ,\\
u_j^n = f_j^n \, ,& j \in \Z^d \, ,\quad j_1\ge 1-r_1\, ,\quad n=0,\dots,s \, ,
\end{cases}
\end{equation}
where the operators $Q_\sigma$ and $B_{j_1,\sigma}$ are given by:
\begin{equation}
\label{defop}
Q_\sigma := \sum_{\ell_1=-r_1}^{p_1} \sum_{\ell'=-r'}^{p'} a_{\ell,\sigma} \, {\bf S}^\ell \, ,\quad 
B_{j_1,\sigma} := \sum_{\ell_1=0}^{q_1} \sum_{\ell'=-q'}^{q'} b_{\ell,j_1,\sigma} \, {\bf S}^\ell \, .
\end{equation}
In \eqref{defop}, the $a_{\ell,\sigma},b_{\ell,j_1,\sigma}$ are {\it real numbers} and are independent of the 
small parameter $\Delta t$ (they may depend on the CFL parameters though), while ${\bf S}$ denotes the 
shift operator on the space grid: $({\bf S}^\ell v)_j :=v_{j+\ell}$ for $j,\ell \in \Z^d$. We have also used the 
short notation
\begin{equation*}
\sum_{\ell'=-r'}^{p'} :=\sum_{i=2}^d \sum_{\ell_i=-r_i}^{p_i} \, ,\quad 
\sum_{\ell'=-q'}^{q'} :=\sum_{i=2}^d \sum_{\ell_i=-q_i}^{q_i} \, .
\end{equation*}

The numerical scheme \eqref{numibvp} is understood as follows: one starts with $\ell^2$ initial data $(f_j^0)$, ..., 
$(f_j^s)$ defined for $j_1 \ge 1-r_1$. Assuming that the solution has been defined up to some time index $n+s$, 
$n \ge 0$, then the first and second equations in \eqref{numibvp} should uniquely determine $u_j^{n+s+1}$ for 
$j_1 \ge 1-r_1$, $j' \in \Z^{d-1}$. The meshes associated with $j_1 \ge 1$ correspond to the {\it interior domain} 
while those associated with $j_1=1-r_1,\dots,0$ represent the {\it discrete boundary}. We wish to deal here 
simultaneously with explicit and implicit schemes and therefore make the following solvability assumption.

\begin{assumption}[Solvability of \eqref{numibvp}]
\label{assumption0}
The operator $Q_{s+1}$ is an isomorphism on $\ell^2 (\Z^d)$. Moreover, for all $(F_j) \in \ell^2 (\N^* \times \Z^{d-1})$ 
and for all $g_{1-r_1,\cdot},\dots,g_{0,\cdot} \in \ell^2 (\Z^{d-1})$, there exists a unique solution $(u_j)_{j_1 \ge 1-r_1} 
\in \ell^2$ to the equations
\begin{equation*}
\begin{cases}
Q_{s+1} \, u_j =F_j \, ,& j \in \Z^d \, ,\quad j_1 \ge 1\, ,\\
u_j +B_{j_1,s+1} \, u_{1,j'} =g_j \, ,& j \in \Z^d \, ,\quad j_1=1-r_1,\dots,0\, .
\end{cases}
\end{equation*}
\end{assumption}

\noindent In particular, Assumption \ref{assumption0} is trivially satisfied in the case of explicit schemes for which 
$Q_{s+1}$ is the identity ($a_{\ell,s+1} =\delta_{\ell_1,0} \cdots \delta_{\ell_d,0}$ in \eqref{defop}, with $\delta$ the 
Kronecker symbol).

The first and second equations in \eqref{numibvp} therefore uniquely determine $u_j^{n+s+1}$ for $j_1 \ge 1-r_1$, 
and one then proceeds to the following time index $n+s+2$. Existence and uniqueness of a solution $(u_j^n)$ 
to \eqref{numibvp} follows from Assumption \ref{assumption0}, so the last requirement for well-posedness is 
continuous dependence of the solution on the three possible source terms $(F_j^n)$, $(g_j^n)$, $(f_j^n)$. 
This is a {\it stability} problem for which several definitions can be chosen according to the functional framework. 
The following one dates back to \cite{gks} in one space dimension and was also considered by {\sc Michelson} 
\cite{michelson} in several space dimensions. It is specifically relevant when the boundary conditions are 
non-homogeneous ($(g_j^n) \not \equiv 0$):

\begin{definition}[Strong stability]
\label{defstab1}
The finite difference approximation \eqref{numibvp} is said to be "strongly stable" if there exists a constant 
$C$ such that for all $\gamma>0$ and all $\Delta t \in \, (0,1]$, the solution $(u_j^n)$ to \eqref{numibvp} 
with $(f_j^0) =\dots =(f_j^s) =0$ satisfies the estimate:
\begin{multline}
\label{stabilitenumibvp}
\dfrac{\gamma}{\gamma \, \Delta t+1} \, \sum_{n\ge s+1} \Delta t \, {\rm e}^{-2\, \gamma \, n\, \Delta t} \, 
\Ng u^n \Nd_{1-r_1,+\infty}^2 +\sum_{n\ge s+1} \Delta t \, {\rm e}^{-2\, \gamma \, n\, \Delta t} \, 
\sum_{j=1-r_1}^{p_1} \| u_{j_1,\cdot}^n \|_{\ell^2 (\Z^{d-1})}^2 \\
\le C \left\{ \dfrac{\gamma \, \Delta t+1}{\gamma} \, \sum_{n\ge s+1} \, \Delta t \, 
{\rm e}^{-2\, \gamma \, n\, \Delta t} \, \Ng F^n \Nd_{1,+\infty}^2 
+\sum_{n\ge s+1} \Delta t \, {\rm e}^{-2\, \gamma \, n\, \Delta t} \, \sum_{j_1=1-r_1}^0 
\| g_{j_1,\cdot}^n \|_{\ell^2 (\Z^{d-1})}^2 \right\} \, .
\end{multline}
\end{definition}

The main contributions in \cite{gks,michelson} are to show that strong stability can be characterized by a 
certain {\it algebraic condition}, which is usually referred to as the Uniform {\sc Kreiss-Lopatinskii} Condition, 
see \cite{jfcnotes} for an overview of such results. We do not pursue such arguments here but rather assume 
from the start that \eqref{numibvp} is strongly stable. We can thus control, with zero initial data, $\ell^2$ type 
norms of the solution to \eqref{numibvp}. Our goal is to understand which kind of stability estimate holds for 
the solution to \eqref{numibvp} when one now considers nonzero initial data $(f_j^0),\dots,(f_j^s)$ in $\ell^2$. 
Our main assumption is the following.

\begin{assumption}[Stability for the discrete Cauchy problem]
\label{assumption1}
For all $\xi \in \R^d$, the dispersion relation
\begin{equation}
\label{dispersion}
\sum_{\sigma=0}^{s+1} \widehat{Q_\sigma} ({\rm e}^{i\, \xi_1},\dots,{\rm e}^{i\, \xi_d}) \, z^\sigma =0 \, ,\quad 
\widehat{Q_\sigma}(\kappa) := \sum_{\ell=-r}^p \kappa^\ell \, a_{\ell,\sigma} \, ,
\end{equation}
has $s+1$ simple roots in $\Dbar$. (The von Neumann condition is said to hold when the roots are located 
in $\Dbar$.) In \eqref{dispersion}, we have used the classical notation
\begin{equation*}
\kappa^\ell :=\kappa_1^{\ell_1} \cdots \kappa_d^{\ell_d} \, ,
\end{equation*}
for $\kappa \in (\C \setminus \{ 0 \})^d$ and $\ell \in \Z^d$.
\end{assumption}

\noindent From Assumption \ref{assumption0}, we know that $Q_{s+1}$ is an isomorphism on $\ell^2$, which 
implies by Fourier analysis that $\widehat{Q_{s+1}} ({\rm e}^{i\, \xi_1},\dots,{\rm e}^{i\, \xi_d})$ does not vanish 
for any $\xi \in \R^d$. In particular, the dispersion relation \eqref{dispersion} is a polynomial equation of degree 
$s+1$ in $z$ for any $\xi \in \R^d$. We now make the following assumption, which already appeared in 
\cite{gks,michelson} and several other works on the same topic.

\begin{assumption}[Noncharacteristic discrete boundary]
\label{assumption2}
For $\ell_1=-r_1,\dots,p_1$, $z \in \C$ and $\eta \in \R^{d-1}$, let us define
\begin{equation}
\label{defA-d}
a_{\ell_1}(z,\eta) :=\sum_{\sigma=0}^{s+1} z^\sigma \, \sum_{\ell'=-r'}^{p'} 
a_{\ell,\sigma} \, {\rm e}^{i \, \ell' \cdot \eta} \, .
\end{equation}
Then $a_{-r_1}$ and $a_{p_1}$ do not vanish on $\Ubar \times \R^{d-1}$, and they have nonzero degree 
with respect to $z$ for all $\eta \in \R^{d-1}$.
\end{assumption}

\noindent Our main result is comparable with \cite[Theorem 3.3]{wu} and \cite[Theorems 2.4 and 3.5]{jfcag} 
and shows that strong stability (or "GKS stability") is a sufficient condition for incorporating $\ell^2$ initial 
conditions in \eqref{numibvp} and proving {\it optimal} semigroup estimates. The main price to pay in 
Assumption \ref{assumption1} is that the roots of the dispersion relation \eqref{dispersion}, which are 
nothing but the eigenvalues of the so-called {\it amplification matrix} for the Cauchy problem, need to be 
{\it simple}. This property is satisfied for instance by the leap-frog and modified leap-frog schemes in 
several space dimensions, under an appropriate CFL condition, see Paragraph \ref{examples}. Our 
main result reads as follows.

\begin{theorem}
\label{mainthm}
Let Assumptions \ref{assumption0}, \ref{assumption1} and \ref{assumption2} be satisfied, and assume 
that the scheme \eqref{numibvp} is strongly stable in the sense of Definition \ref{defstab1}. Then there exists 
a constant $C$ such that for all $\gamma>0$ and all $\Delta t \in \, (0,1]$, the solution to \eqref{numibvp} 
satisfies the estimate:
\begin{multline}
\label{estim1d}
\sup_{n \ge 0} \, {\rm e}^{-2\, \gamma \, n\, \Delta t} \, \Ng u^n \Nd_{1-r_1,+\infty}^2 
+\dfrac{\gamma}{\gamma \, \Delta t+1} \, 
\sum_{n\ge 0} \Delta t \, {\rm e}^{-2\, \gamma \, n\, \Delta t} \, \Ng u^n \Nd_{1-r_1,+\infty}^2 \\
+\sum_{n\ge 0} \Delta t \, {\rm e}^{-2\, \gamma \, n\, \Delta t} \, \sum_{j_1=1-r_1}^{p_1} 
\| u_{j_1,\cdot}^n \|_{\ell^2(\Z^{d-1})}^2 \le C \, \left\{ \sum_{\sigma=0}^s \Ng f^\sigma \Nd_{1-r_1,+\infty}^2 
+\dfrac{\gamma \, \Delta t+1}{\gamma} \, 
\sum_{n\ge s+1} \Delta t \, {\rm e}^{-2\, \gamma \, n\, \Delta t} \, \Ng F^n \Nd_{1,+\infty}^2 \right. \\
\left. +\sum_{n\ge s+1} \Delta t \, {\rm e}^{-2\, \gamma \, n\, \Delta t} \, \sum_{j_1=1-r_1}^0 
\| g_{j_1,\cdot}^n \|_{\ell^2(\Z^{d-1})}^2 \right\} \, .
\end{multline}
In particular, the scheme \eqref{numibvp} is "semigroup stable" in the sense that there exists a constant 
$C$ such that for all $\Delta t \in \, (0,1]$, the solution $(u_j^n)$ to \eqref{numibvp} with $(F_j^n) =(g_j^n) 
=0$ satisfies the estimate
\begin{equation}
\label{estimsemigroup}
\sup_{n\ge 0} \, \Ng u^n \Nd_{1-r_1,+\infty}^2 \le C \, \sum_{\sigma=0}^s \Ng f^\sigma \Nd_{1-r_1,+\infty}^2 \, .
\end{equation}
The scheme \eqref{numibvp} is also $\ell^2$-stable with respect to boundary data, see 
\cite[Definition 4.5]{trefethen3}, in the sense that there exists a constant $C$ such that for all $\Delta t \in \, 
(0,1]$, the solution $(u_j^n)$ to \eqref{numibvp} with $(F_j^n)=(f_j^n)=0$ satisfies the estimate
\begin{equation*}
\sup_{n\ge 0} \, \Ng u^n \Nd_{1-r_1,+\infty}^2 \le C \, \sum_{n\ge s+1} \Delta t \, \sum_{j_1=1-r_1}^0 
\| g_{j_1,\cdot}^n \|_{\ell^2(\Z^{d-1})}^2 \, .
\end{equation*}
\end{theorem}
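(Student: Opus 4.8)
The plan is to derive \eqref{estim1d} --- which, unlike the strong stability estimate \eqref{stabilitenumibvp}, both contains the $\sup_n$ term and allows nonzero initial data --- by applying a discrete Leray-G\aa{}rding multiplier directly to the solution of \eqref{numibvp} and then controlling the resulting boundary terms through strong stability. Following the analogy with the continuous multiplier $L'u$ whose symbol is $\partial_\tau P$, I would first construct a \emph{local} multiplier for the interior recurrence $\sum_{\sigma=0}^{s+1} Q_\sigma\, u_j^{n+\sigma}$, built from the $z$-derivative of the dispersion polynomial in \eqref{dispersion} --- the systematic counterpart of the \lq\lq magical\rq\rq{} leap-frog multiplier $u_j^{n+2}+u_j^n$. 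Pairing this multiplier with the interior equation (valid for $j_1 \ge 1$), summing over $j_1 \ge 1$ and telescoping in time against the weight ${\rm e}^{-2\gamma n\Delta t}$ should produce an energy identity of the form
\[
\sup_{n \ge 0}{\rm e}^{-2\gamma n\Delta t}\,\mathcal{E}^n + \dfrac{\gamma}{\gamma\Delta t+1}\sum_{n \ge 0}\Delta t\,{\rm e}^{-2\gamma n\Delta t}\,\mathcal{E}^n \le C\left\{\mathcal{E}^0_{\mathrm{in}} + \dfrac{\gamma\Delta t+1}{\gamma}\sum_{n \ge s+1}\Delta t\,{\rm e}^{-2\gamma n\Delta t}\,\Ng F^n\Nd_{1,+\infty}^2 + \mathcal{B}\right\},
\]
where $\mathcal{E}^n$ is a quadratic energy functional, $\mathcal{E}^0_{\mathrm{in}}$ is controlled by $\sum_{\sigma=0}^s\Ng f^\sigma\Nd_{1-r_1,+\infty}^2$, and $\mathcal{B}$ is a boundary flux. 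Assumption \ref{assumption1} --- the simplicity of the $s+1$ roots of \eqref{dispersion} --- is what renders $\mathcal{E}^n$ coercive (though not necessarily local), comparable to $\Ng u^n\Nd_{1-r_1,+\infty}^2$, the discrete analogue of the positive wave energy $(\partial_t u)^2+\sum_j(\partial_{x_j}u)^2$; the noncharacteristic Assumption \ref{assumption2} ensures that $\mathcal{B}$ involves only the finitely many boundary layers, i.e. that it is bounded by $\sum_{n \ge 0}\Delta t\,{\rm e}^{-2\gamma n\Delta t}\sum_{j_1=1-r_1}^{p_1}\|u_{j_1,\cdot}^n\|_{\ell^2(\Z^{d-1})}^2$. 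This already delivers the $\sup_n$ and bulk terms on the left of \eqref{estim1d} and reduces the whole proof to bounding these boundary-layer traces of $u$ by the right-hand side of \eqref{estim1d}.

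To control the boundary-layer traces I would split $u = v + w$, where $v$ solves the pure Cauchy problem on the full grid $\Z^d$ --- the interior recurrence for all $j \in \Z^d$ (with $F$ extended by zero to $j_1 \le 0$) together with the initial data $v_j^n = f_j^n$ for $n = 0,\dots,s$ (with $f$ extended to $j_1 < 1-r_1$ by a bounded extension). By construction $w := u - v$ solves \eqref{numibvp} with zero initial data and zero interior source, its only forcing being the boundary data
\[
\widetilde{g}_j^{\,n+s+1} := g_j^{n+s+1} - \Big( v_j^{n+s+1} + \sum_{\sigma=0}^{s+1} B_{j_1,\sigma}\, v_{1,j'}^{n+\sigma}\Big), \qquad j_1 = 1-r_1,\dots,0,
\]
namely the original boundary data corrected by the boundary trace of $v$. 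Applying strong stability \eqref{stabilitenumibvp} to $w$ controls its bulk and boundary-layer traces by the weighted space-time norm of $\widetilde{g}$, and via $|a+b|^2 \le 2|a|^2+2|b|^2$ the contribution of $g$ is exactly the boundary term on the right of \eqref{estim1d}. Since the boundary-layer traces of $u$ are bounded by those of $v$ and of $w$, the entire argument closes once the boundary traces of the Cauchy solution $v$ are controlled by the initial data.

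This last trace estimate for $v$ is the heart of the matter and, I expect, the main obstacle, namely
\[
\sum_{n \ge 0}\Delta t\,\sum_{j_1=1-r_1}^{p_1}\|v_{j_1,\cdot}^n\|_{\ell^2(\Z^{d-1})}^2 \le C\,\sum_{\sigma=0}^s\Ng f^\sigma\Nd_{1-r_1,+\infty}^2
\]
(and a fortiori the same bound carrying the weight ${\rm e}^{-2\gamma n\Delta t}$). The difficulty is that a bound through the $\sup_n$ energy of $v$ alone would cost a factor $1/\gamma$ after summation in time, which is inadmissible; one genuinely needs the boundary traces to be square-summable in time with a bound by the initial energy and no negative power of $\gamma$. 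The Cauchy solution has the explicit spectral representation $\widehat{v}^{\,n}(\xi) = \sum_k c_k(\xi)\,z_k(\xi)^n$, where the $z_k(\xi) \in \Dbar$ are the $s+1$ roots of \eqref{dispersion}; here Assumption \ref{assumption1} is indispensable, since the simplicity of the roots rules out any Jordan block in the amplification matrix, hence any polynomial-in-$n$ growth that would destroy temporal square-summability, and it keeps the coefficients $c_k(\xi)$ smooth. I would establish the trace bound by a Plancherel computation in $(n,j')$ combined with a residue/contour argument in the $\xi_1$ variable, using that the roots of modulus one are isolated and simple and that Assumption \ref{assumption2} makes the boundary noncharacteristic, so that the trace on a single layer controls all layers $1-r_1 \le j_1 \le p_1$. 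Carrying out this Fourier analysis cleanly and uniformly in the mesh parameter $\Delta t$ is where I expect the bulk of the technical work to lie; it is precisely the discrete counterpart of the trace regularity that the Leray-G\aa{}rding energy provides in the continuous setting.

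Finally I would assemble the pieces: the multiplier identity of the first step controls $\sup_n$ and the bulk by the initial data, the source, and the boundary-layer traces; the latter are in turn bounded, through the decomposition, by strong stability for $w$ and by the trace estimate for $v$, both ultimately controlled by the right-hand side of \eqref{estim1d}. Substituting and absorbing yields \eqref{estim1d}. The two corollaries follow by specialization and by letting $\gamma \to 0^+$: for \eqref{estimsemigroup} one sets $(F_j^n) = (g_j^n) = 0$, so that the right-hand side is $\gamma$-independent and the limit gives $\sup_n \Ng u^n\Nd_{1-r_1,+\infty}^2 \le C\sum_{\sigma=0}^s\Ng f^\sigma\Nd_{1-r_1,+\infty}^2$; for the $\ell^2$-stability with respect to boundary data one sets $(F_j^n) = (f_j^n) = 0$, bounds ${\rm e}^{-2\gamma n\Delta t} \le 1$ in the boundary term on the right, and again lets $\gamma \to 0^+$.
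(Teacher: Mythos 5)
Your overall architecture (multiplier identity for the interior, superposition $u=v+w$, strong stability applied to $w$, trace estimate for the homogeneous part $v$) matches the paper's, and both your first step and your treatment of $w$ are essentially what the paper does. The genuine gap is in your choice of $v$: you take $v$ to solve the \emph{pure Cauchy problem} on $\Z^d$, and your whole argument then hinges on the trace estimate
\begin{equation*}
\sum_{n \ge 0}\Delta t\,\sum_{j_1=1-r_1}^{p_1}\|v_{j_1,\cdot}^n\|_{\ell^2(\Z^{d-1})}^2 \le C\,\sum_{\sigma=0}^s\Ng f^\sigma \Nd_{-\infty,+\infty}^2
\end{equation*}
for the Cauchy solution. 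This estimate is \emph{not} a consequence of Assumptions \ref{assumption0}--\ref{assumption2}: it degenerates at \emph{glancing} frequencies, i.e.\ at pairs $(z,\eta)$ with $z\in\cercle$ for which the spatial symbol $\sum_{\ell_1} a_{\ell_1}(z,\eta)\,\kappa_1^{\ell_1}$ has a root $\kappa_1\in\cercle$ with vanishing group velocity in the $x_1$ direction --- equivalently, where the companion matrix $\LL(z,\eta)$ of \eqref{defL} has an eigenvalue on $\cercle$. At such points the resonance $z_k(\xi)=e^{i\theta}$ in your spectral representation is tangential, the co-area factor in your residue/contour computation blows up, and time-square-summability of the trace with a $\gamma$-uniform constant is lost. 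This is exactly why the author's earlier work \cite{jfc} needed a non-glancing hypothesis --- which, as the introduction notes, the leap-frog scheme does \emph{not} satisfy --- so your route cannot cover the paper's flagship example.

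The paper's way around this is the one idea your proposal is missing: $v$ is taken to solve not the Cauchy problem but the auxiliary initial \emph{boundary} value problem \eqref{numabsorbing}, in which the multiplier itself is imposed as a dissipative boundary condition, $M\,v_j^n=0$ for $j_1\le 0$ (Theorem \ref{absorbing}). The energy balance of Proposition \ref{prop2} then yields the semigroup bound together with control of $\Ng L\,v^n\Nd_{-\infty,+\infty}$, and the trace estimate is obtained from Lemma \ref{lem3} by a compactness argument on the \emph{pair} of resolvent recursions associated with $L$ and $M$. The decisive point is that the companion matrix $\M(z,\eta)$ of \eqref{defM}, built from the derivatives $\partial_z a_{\ell_1}$, has \emph{no} eigenvalue on $\cercle$ for any $z\in\Ubar$ --- a consequence of the simplicity of the roots of \eqref{dispersion} via the Gauss--Lucas theorem --- so the glancing modes that obstruct the Cauchy trace estimate are annihilated by the extra information $M v=0$ on $j_1\le 0$. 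To repair your plan you must replace the Cauchy problem for $v$ by this auxiliary problem (or add a non-glancing hypothesis, which would strictly weaken the theorem).
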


\noindent Theorem \ref{mainthm} gives the optimal semigroup estimate \eqref{estimsemigroup}, and is 
therefore an improvement with respect to our earlier work \cite{jfc} where in one space dimension, and 
under an appropriate {\it non-glancing} condition\footnote{The non-glancing condition is unfortunately not 
met by the leap-frog scheme.}, we were able to derive the estimate (here $r_1=r$, $p_1=p$ since $d=1$):
\begin{multline*}
\dfrac{\gamma}{\gamma \, \Delta t+1} \, 
\sum_{n\ge 0} \Delta t \, {\rm e}^{-2\, \gamma \, n\, \Delta t} \, \Ng u^n \Nd_{1-r,+\infty}^2 
+\sum_{n\ge 0} \Delta t \, {\rm e}^{-2\, \gamma \, n\, \Delta t} \, \sum_{j=1-r}^p |u_j^n|^2 \\
\le C \, \left\{ \sum_{\sigma=0}^s \Ng f^\sigma \Nd_{1-r,+\infty}^2 +\dfrac{\gamma \, \Delta t+1}{\gamma} \, 
\sum_{n\ge s+1} \Delta t \, {\rm e}^{-2\, \gamma \, n\, \Delta t} \, \Ng F^n \Nd_{1,+\infty}^2 
+\sum_{n\ge s+1} \Delta t \, {\rm e}^{-2\, \gamma \, n\, \Delta t} \, \sum_{j=1-r}^0 |g_j^n|^2 \right\} \, .
\end{multline*}
The latter estimate does not incorporate on the left hand side the quantity:
\begin{equation*}
\sup_{n \ge 0} \, {\rm e}^{-2\, \gamma \, n\, \Delta t} \, \Ng u^n \Nd_{1-r,+\infty}^2 \, ,
\end{equation*}
and was unfortunately still not sufficient for deriving the semigroup estimate \eqref{estimsemigroup}. 
Our main contribution in this article is to exhibit a suitable multiplier for the multistep recurrence relation 
in \eqref{numibvp}. With this multiplier, we can readily show that, for zero initial data, the (discrete) 
derivative of an {\it energy} can be controlled, as in \cite{rauch}, by the trace estimate of $(u_j^n)$ 
and this is where strong stability comes into play. This first argument gives Theorem \ref{mainthm} for 
zero initial data (and even for nonzero initial data if the non-glancing condition of \cite{jfc} is satisfied). 
By linearity we can then reduce to the case of zero forcing terms in the interior and on the boundary. 
The next arguments in \cite{rauch} use time reversibility, which basically always fails for numerical 
schemes\footnote{With the notable exception of the leap-frog scheme that is indeed time reversible !}. 
Hence we must find another argument for dealing with nonzero initial data. Hopefully, the properties of 
our multiplier enable us to construct an auxiliary problem, where we modify the boundary conditions of 
\eqref{numibvp}, and for which we can prove optimal semigroup and trace estimates by "hand-made" 
calculations. In other words, we exhibit an alternative set of boundary conditions that yields {\it strict 
dissipativity}. Using these auxiliary numerical boundary conditions, the proof of Theorem \ref{mainthm} 
follows from a standard superposition argument, see, e.g., \cite[Section 4.5]{benzoni-serre} for partial 
differential equations or \cite{wu,jfcag} for numerical schemes.

\begin{remark}
Assumption \ref{assumption2} excludes the case of explicit two level schemes for which $s=0$ and 
$Q_1=I$, for in that case $a_{-r_1}$ and/or $a_{p_1}$ do not depend on $z$. However, this case 
has already been dealt with in \cite{wu,jfcag}, and we shall see in Section \ref{section3} where the 
assumption that $a_{-r_1}$ and $a_{p_1}$ are not constant is involved, and why the proof is actually 
simpler in the case $s=0$ and $Q_1=I$.
\end{remark}

\subsection{Examples}
\label{examples}

\subsubsection{One space dimension}

Our goal is to approximate the outgoing transport equation ($d=1$ here):
\begin{equation}
\label{transport}
\partial_t u +a \, \partial_x u=0 \, ,\quad u|_{t=0} =u_0 \, ,
\end{equation}
with $t,x>0$ and $a<0$. The latter transport equation does not require any boundary condition at $x=0$. 
However, discretizing \eqref{transport} usually requires prescribing numerical boundary conditions, unless 
one considers an upwind type scheme with a space stencil "on the right" (meaning $r_1=0$ in \eqref{numibvp}). 
We now detail two possible multistep schemes for discretizing \eqref{transport}. Both are obtained by the 
so-called method of lines, which amounts to first discretizing the space derivative $\partial_x u$ and then 
choosing an integration technique for discretizing the time evolution, see \cite{gko}.

\paragraph{The leap-frog scheme.}  It is obtained by approximating the space derivative $\partial_x u$ 
by the centered difference $(u_{j+1}-u_{j-1})/(2\, \Delta x)$, and by then applying the so-called Nystr\"om 
method of order $2$, see \cite[Chapter III.1]{hnw}. The resulting approximation reads
\begin{equation*}
u_j^{n+2} +\lambda \, a \, (u_{j+1}^{n+1}-u_{j-1}^{n+1}) -u_j^n =0 \, ,
\end{equation*}
which corresponds to $s=p=r=1$. Recall that $\lambda>0$ denotes the fixed ratio $\Delta t/\Delta x$. Even 
though \eqref{transport} does not require any boundary condition at $x=0$, the leap-frog scheme stencil 
includes one point to the left, and we therefore need to prescribe some numerical boundary condition at $j=0$. 
One possibility\footnote{This is of course not the only possibility and we refer to \cite{gko,oliger,sloan,trefethen3} 
for some other possible choices which might be more meaningful from a consistency  and accuracy point of 
view. Our main concern here is a discussion on stability for \eqref{numibvp} and the Dirichlet boundary conditions 
are a good illustration for this aspect.} is to prescribe the homogeneous or inhomogeneous Dirichlet boundary 
condition. With general source terms, the corresponding scheme reads
\begin{equation}
\label{leapfrog}
\begin{cases}
u_j^{n+2} +\lambda \, a \, (u_{j+1}^{n+1}-u_{j-1}^{n+1}) -u_j^n =\Delta t \, F_j^{n+2} \, ,& 
j\ge 1\, ,\quad n\ge 0 \, ,\\
u_0^{n+2} = g_0^{n+2} \, ,& n\ge 0 \, ,\\
(u_j^0,u_j^1) = (f_j^0,f_j^1) \, ,& j\ge 0 \, .
\end{cases}
\end{equation}
Assumption \ref{assumption0} is trivially satisfied because \eqref{leapfrog} is explicit. The leap-frog scheme satisfies 
Assumption \ref{assumption1} provided that $\lambda \, |a|<1$. In that case, the two roots to the dispersion relation
\begin{equation*}
z^2 +2\, i \, \lambda \, a \, \sin \xi \, z -1 =0 \, ,
\end{equation*}
are simple and have modulus $1$ for all $\xi \in \R$. Assumption \ref{assumption2} is satisfied as long as 
the velocity $a$ is nonzero, for in that case $a_1(z)=-a_{-1}(z)=\lambda \, a \, z$. The scheme \eqref{leapfrog} 
is known to be strongly stable, see \cite{goldberg-tadmor}. In particular, Theorem \ref{mainthm} shows that 
\eqref{leapfrog} is semigroup stable. An illustration of this stability property is given in the numerical simulation 
of a bump function, propagating at speed $a=-1$ towards the left. Homogeneous Dirichlet boundary conditions 
are enforced at $j=0$. The reflection of the bump generates a highly oscillatory wave packet that propagates 
with velocity $+1$ towards the right. The envelope of this wave packet coincides with the profile of the initial 
condition, which indicates that the $\ell^2$-norm is roughly preserved by the evolution. This numerical observation 
is in agreement with semigroup boundedness.

Other choices of numerical boundary conditions for the leap-frog scheme or its fourth order extension are discussed, 
e.g., in \cite{oliger,sloan,thomas,trefethen3}. The main discussion in \cite{oliger,sloan,trefethen3} is to verify strong 
stability for a wide choice of numerical boundary conditions, and if strong stability holds, then Theorem \ref{mainthm} 
automatically gives semigroup boundedness, which was not achieved in these earlier works.

\begin{figure}[h!]
\begin{center}
\includegraphics[trim=20 20 20 20,clip,scale=0.35]{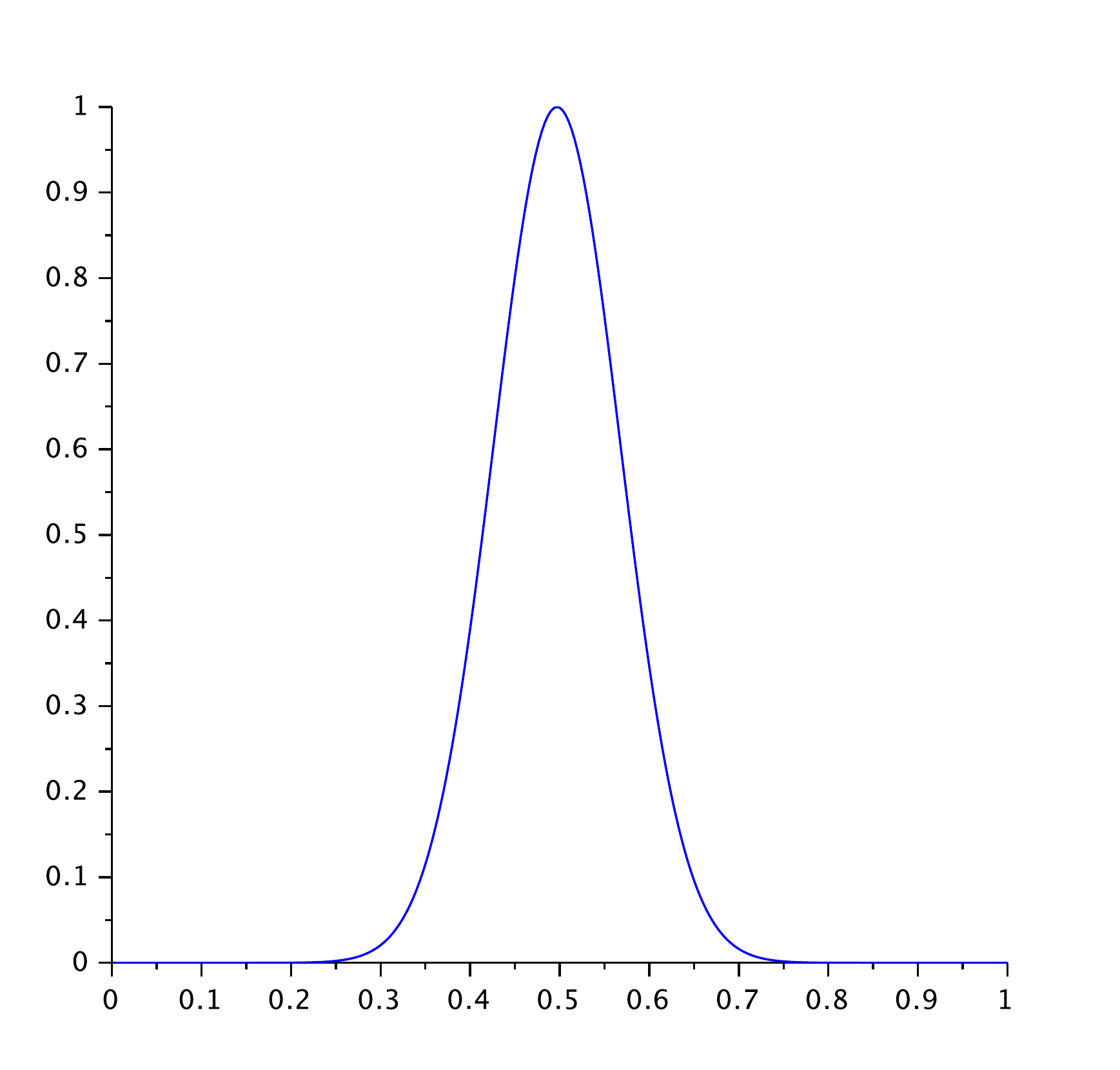}
\includegraphics[trim=20 20 20 20,clip,scale=0.35]{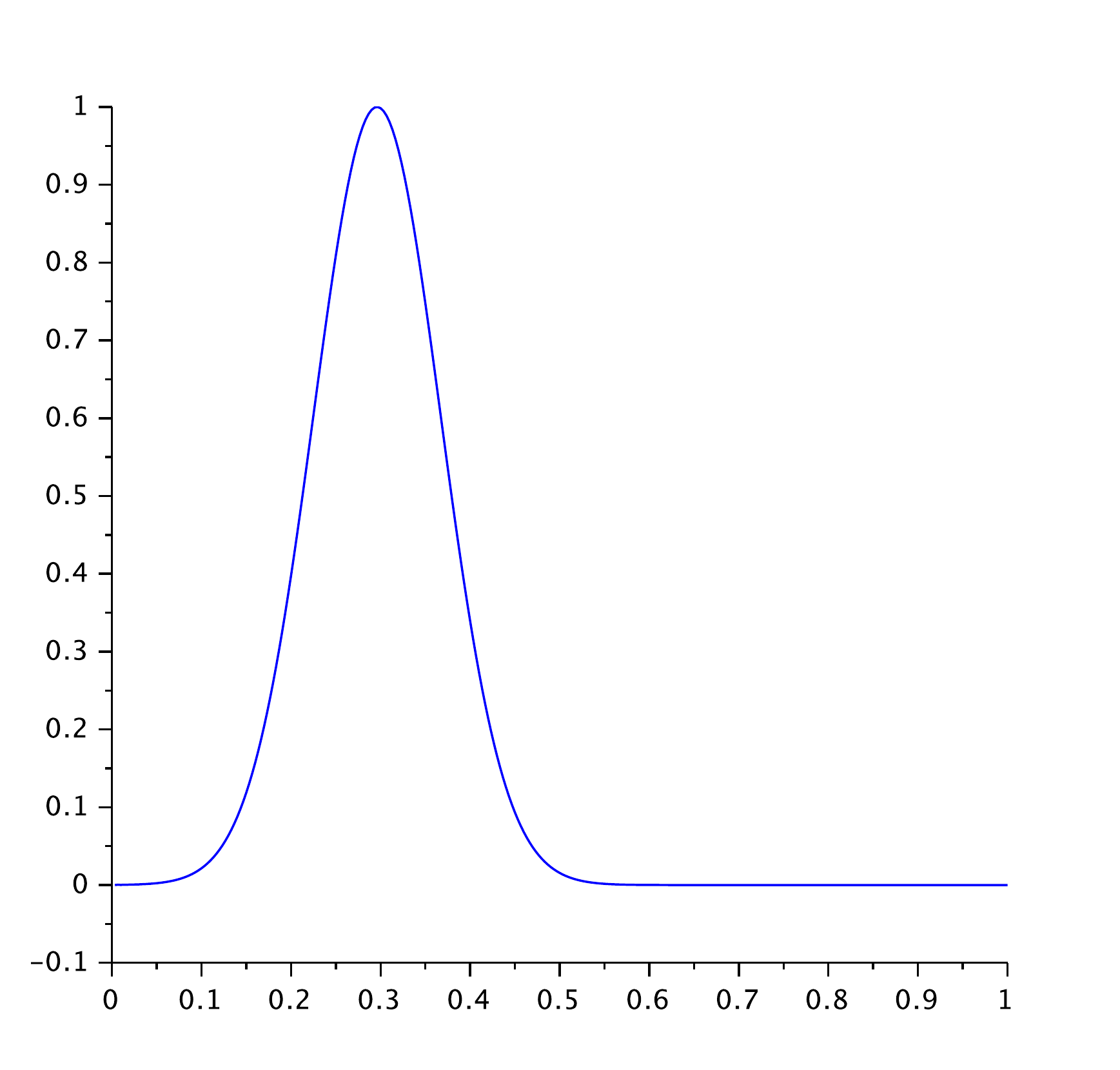}

\includegraphics[trim=20 20 20 20,clip,scale=0.35]{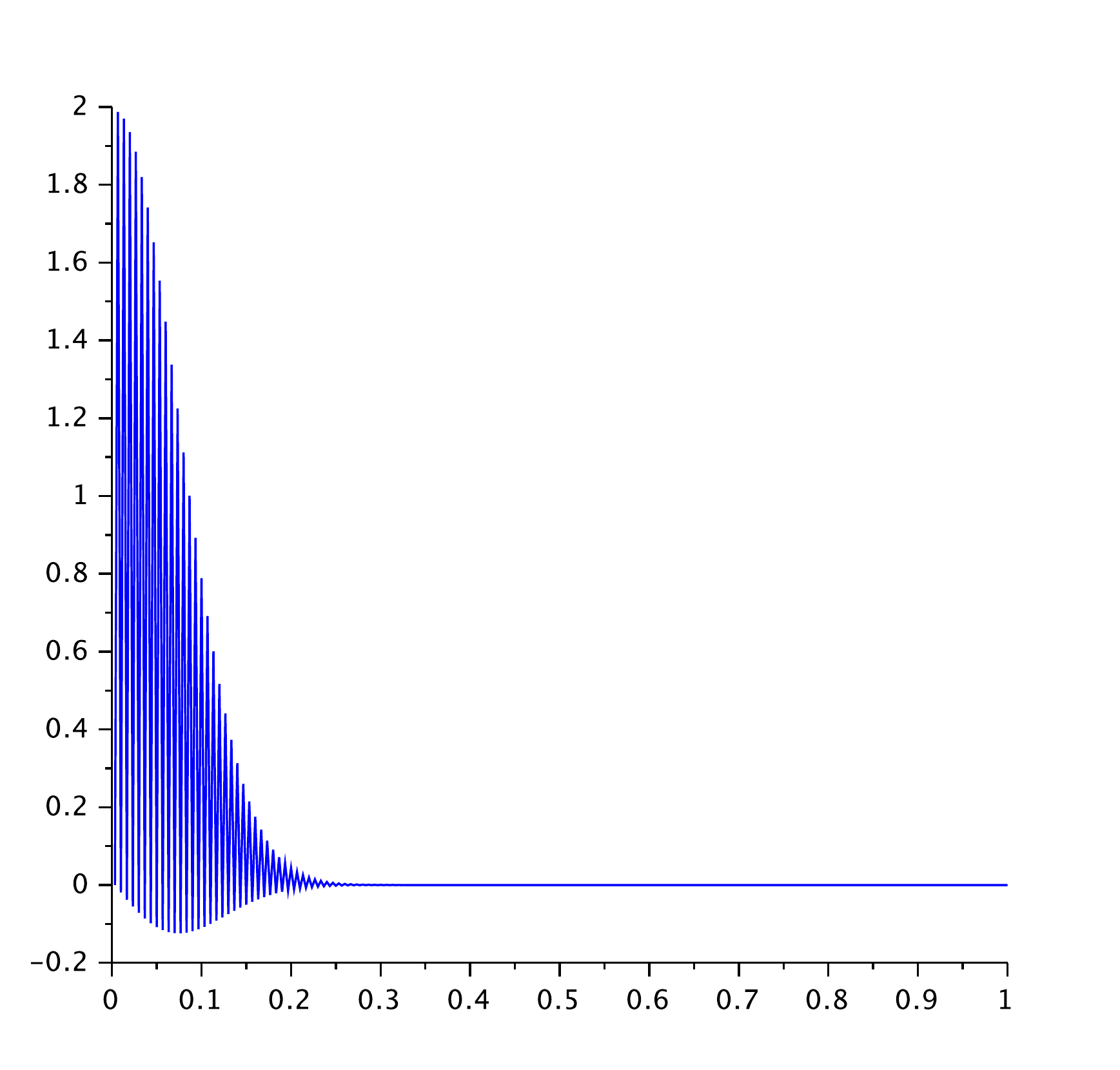}
\includegraphics[trim=20 20 20 20,clip,scale=0.35]{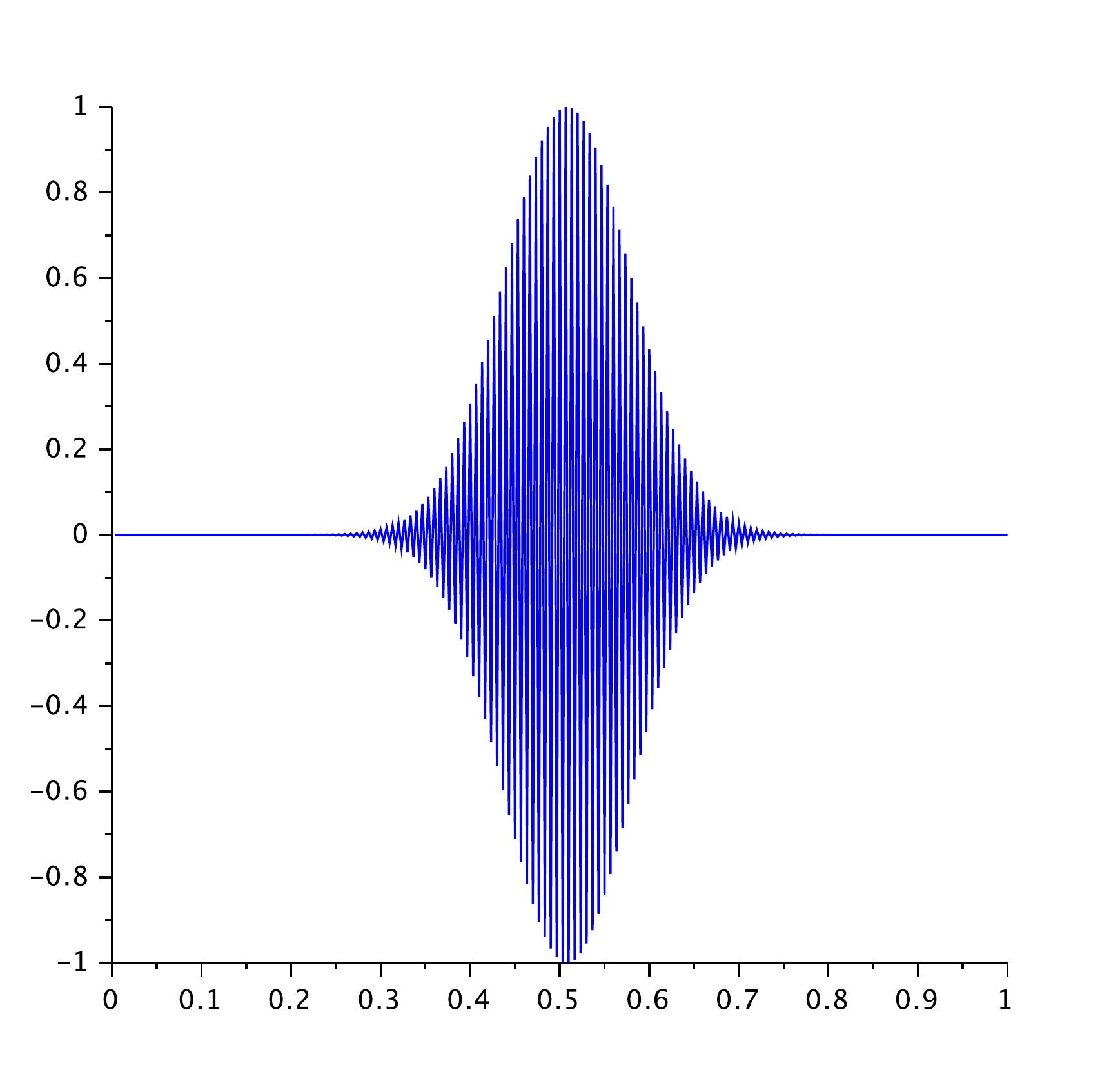}
\end{center}
\caption{Reflection of a bump by the leap-frog scheme with homogeneous Dirichlet condition at four successive times.}
\label{fig:SM}
\end{figure}

\paragraph{A scheme based on the backwards differentiation rule.}  We still start from the transport equation 
\eqref{transport}, approximate the space derivative $\partial_x u$ by the centered finite difference $(u_{j+1}-u_{j-1}) 
/(2\, \Delta x)$, and then apply the backwards differentiation formula of order $2$, see \cite[Chapter III.1]{hnw}. The 
resulting scheme reads:
\begin{equation*}
\dfrac{3}{2} \, u_j^{n+2} +\dfrac{\lambda \, a}{2} \, (u_{j+1}^{n+2}-u_{j-1}^{n+2}) -2 \, u_j^{n+1} +\dfrac{1}{2} \, u_j^n =0 \, .
\end{equation*}
This corresponds to $s=1$ and
\begin{equation*}
Q_2 \quad : \quad (u_j)_{j \in \Z} \longmapsto \left( \dfrac{3}{2} \, u_j +\dfrac{\lambda \, a}{2} \, (u_{j+1}-u_{j-1}) 
\right)_{j \in \Z} \, .
\end{equation*}
The operator $Q_2$ is an isomorphism on $\ell^2(\Z)$ since $Q_2$ is an isomorphism for any small $\lambda \, a$ 
(as a perturbation of $3/2 \, I$), $Q_2$ depends continuously on $\lambda \, a$, and there holds (uniformly with 
respect to $\lambda \, a$):
\begin{equation*}
\dfrac{3}{2} \, \Ng u \Nd_{-\infty,+\infty} \le \Ng Q_2 \, u \Nd_{-\infty,+\infty} \, .
\end{equation*}
The operator $Q_2$ is therefore an isomorphism on $\ell^2(\Z)$ for any $\lambda \, a>0$ (see, e.g., 
\cite[Lemma 4.3]{jfcsinum}). Let us now study the dispersion relation \eqref{dispersion}, which reads here
\begin{equation*}
\left( \dfrac{3}{2} +i \, \lambda \, a \, \sin \xi \right) \, z^2 -2 \, z +\dfrac{1}{2} =0 \, .
\end{equation*}
It is clear that the latter equation has two simple roots in $z$ for any $\xi \in \R$. Moreover, if $\sin \xi=0$, the roots 
are $1$ and $1/3$ which belong to $\Dbar$. In the case $\sin \xi \neq 0$, none of the roots belongs to $\cercle$ and 
examining the case $\lambda \, a \, \sin \xi=1$, we find that for $\sin \xi \neq 0$, both roots belong to $\D$ (which is 
consistent with the shape of the stability region for the backwards differentiation formula of order $2$, see 
\cite[Chapter V.1]{hw}). Assumption \ref{assumption1} is therefore satisfied. Assumption \ref{assumption2} is satisfied 
as long as $a$ is nonzero since there holds $p=r=1$ and $a_1(z)=a_{-1}(z)=\lambda \, a \, z^2/2$.

Theorem \ref{mainthm} therefore yields semigroup boundedness as long as one uses numerical boundary conditions 
for which the numerical scheme is well-defined (this is at least the case for $\lambda \, a$ small enough) and strong 
stability holds.

\subsubsection{Two space dimensions}

Here we wish to approximate the two-dimensional transport equation ($d=2$):
\begin{equation*}
\partial_t u +a_1 \, \partial_{x_1} u +a_2 \, \partial_{x_2} u =0 \, ,\quad u|_{t=0} =u_0 \, ,
\end{equation*}
in the space domain $\{ x_1>0 \, , \, x_2 \in \R \}$. When $a_1$ is negative, the latter problem does not 
necessitate any boundary condition at $x_1=0$. Following \cite{ag1}, we use one of the following two-dimensional 
versions of the leap-frog scheme, either
\begin{equation}
\label{lf2-1}
u_{j,k}^{n+2} +\lambda_1 \, a_1 \, (u_{j+1,k}^{n+1}-u_{j-1,k}^{n+1}) 
+\lambda_2 \, a_2 \, (u_{j,k+1}^{n+1}-u_{j,k-1}^{n+1}) -u_{j,k}^n =0 \, ,
\end{equation}
or
\begin{align}
u_{j,k}^{n+2} &+\lambda_1 \, a_1 \, \left( \dfrac{u_{j+1,k+1}^{n+1}+u_{j+1,k-1}^{n+1}}{2} 
-\dfrac{u_{j-1,k+1}^{n+1}+u_{j-1,k-1}^{n+1}}{2} \right) \notag \\
&+\lambda_2 \, a_2 \, \left( \dfrac{u_{j+1,k+1}^{n+1}+u_{j-1,k+1}^{n+1}}{2} 
-\dfrac{u_{j+1,k-1}^{n+1}+u_{j-1,k-1}^{n+1}}{2} \right) -u_{j,k}^n =0 \, .\label{lf2-2}
\end{align}
Assumption \ref{assumption0} is trivially satisfied because \eqref{lf2-1} and \eqref{lf2-2} are explicit schemes. 
The scheme \eqref{lf2-1} satisfies Assumption \ref{assumption1} if and only if $\lambda_1 \, |a_1| +\lambda_2 
\, |a_2|<1$, while the scheme \eqref{lf2-2} satisfies Assumption \ref{assumption1} if and only if $\max (\lambda_1 
\, |a_1|,\lambda_2 \, |a_2|)<1$. Let us now study when Assumption \ref{assumption2} is valid. For the scheme 
\eqref{lf2-1}, we have 
$r_1=p_1=1$, and
\begin{equation*}
a_1(z,\eta) =\lambda_1 \, a_1 \, z \, ,\quad a_{-1}(z,\eta) =-a_1(z,\eta) \, ,
\end{equation*}
so Assumption \ref{assumption2} is valid as long as $a_1 \neq 0$. For the scheme \eqref{lf2-2}, we have 
again $r_1=p_1=1$, and
\begin{equation*}
a_1(z,\eta) =z \, (\lambda_1 \, a_1 \, \cos \eta +i \, \lambda_2 \, a_2 \, \sin \eta) \, ,\quad 
a_{-1}(z,\eta) =z \, (-\lambda_1 \, a_1 \, \cos \eta +i \, \lambda_2 \, a_2 \, \sin \eta) \, ,
\end{equation*}
so Assumption \ref{assumption2} is valid as long as both $a_1$ and $a_2$ are nonzero. We refer to \cite{ag2} 
for the verification of strong stability depending on the choice of some numerical boundary conditions for 
\eqref{lf2-1} or \eqref{lf2-2}. Once again, if strong stability holds, then Theorem \ref{mainthm} yields 
semigroup boundedness and $\ell^2$-stability with respect to boundary data.

\section{The Leray-G{\aa}rding method for fully discrete Cauchy problems}
\label{section2}

This section is devoted to proving stability estimates for discretized Cauchy problems, which is the first step 
before considering the discretized initial boundary value problem \eqref{numibvp}. More precisely, we consider 
the simpler case of the whole space $j \in \Z^d$, and the recurrence relation:
\begin{equation}
\label{numcauchy}
\begin{cases}
{\dps \sum_{\sigma=0}^{s+1}} Q_\sigma \, u_j^{n+\sigma} =0 \, ,& 
j\in \Z^d \, ,\quad n\ge 0 \, ,\\
u_j^n = f_j^n \, ,& j\in \Z^d \, ,\quad n=0,\dots,s \, ,
\end{cases}
\end{equation}
where the operators $Q_\sigma$ are given by \eqref{defop}. We recall that in \eqref{defop}, the $a_{\ell,\sigma}$ 
are real numbers and are independent of the small parameter $\Delta t$ (they may depend on the CFL parameters 
$\lambda_1,\dots,\lambda_d$), while ${\bf S}$ denotes the shift operator on the space grid: $({\bf S}^\ell v)_j := 
v_{j+\ell}$ for $j,\ell \in \Z^d$. Stability of \eqref{numcauchy} is defined as follows.

\begin{definition}[Stability for the discrete Cauchy problem]
\label{def2}
The numerical scheme defined by \eqref{numcauchy} is ($\ell^2$-) stable if $Q_{s+1}$ is an isomorphism from 
$\ell^2 (\Z^d)$ onto itself, and if furthermore there exists a constant $C_0>0$ such that for all $\Delta t \in \, (0,1]$, 
for all initial conditions $(f_j^0)_{j \in \Z^d},\dots,(f_j^s)_{j \in \Z^d}$ in $\ell^2 (\Z^d)$, there holds
\begin{equation}
\label{estimcauchy}
\sup_{n \in \N} \, \Nd u^n \Nd_{-\infty,+\infty}^2 \le C_0 \, \sum_{\sigma=0}^s \Ng f^\sigma \Nd_{-\infty,+\infty}^2 \, .
\end{equation}
\end{definition}

\noindent Let us quickly recall that stability in the sense of Definition \ref{def2} is in fact independent of $\Delta t 
\in (0,1]$ (because \eqref{numcauchy} does not involve $\Delta t$ and \eqref{estimcauchy} can be simplified on 
either side by $\prod_i \Delta x_i$), and can be characterized in terms of the uniform power boundedness of the 
so-called amplification matrix
\begin{equation}
\label{defA2pas}
{\mathcal A}(\kappa) := \begin{pmatrix}
-\widehat{Q_s}(\kappa)/\widehat{Q_{s+1}}(\kappa) & \dots & \dots & -\widehat{Q_0}(\kappa)/\widehat{Q_{s+1}}(\kappa) \\
1 & 0 & \dots & 0 \\
0 & \ddots & \ddots & \vdots \\
0 & 0 & 1 & 0 \end{pmatrix} \in {\mathcal M}_{s+1}(\C)\, ,
\end{equation}
where the $\widehat{Q_\sigma}(\kappa)$'s are defined in \eqref{dispersion} and where it is understood that ${\mathcal A}$ 
is defined on the largest open set of $\C^d$ on which $\widehat{Q_{s+1}}$ does not vanish. Let us also recall that if 
$Q_{s+1}$ is an isomorphism from $\ell^2(\Z^d)$ onto itself, then $\widehat{Q_{s+1}}$ does not vanish on $(\cercle)^d$, 
and therefore does not vanish on an open neighborhood of $(\cercle)^d$. With the above definition \eqref{defA2pas} for 
${\mathcal A}$, the following well-known result holds:

\begin{proposition}[Characterization of stability for the fully discrete Cauchy problem]
\label{prop1}
Assume that $Q_{s+1}$ is an isomorphism from $\ell^2(\Z^d)$ onto itself. Then the scheme \eqref{numcauchy} is 
stable in the sense of Definition \ref{def2} if and only if there exists a constant $C_1>0$ such that the amplification 
matrix ${\mathcal A}$ in \eqref{defA2pas} satisfies
\begin{equation}
\label{unifpowerbound2}
\forall \, n \in \N \, ,\quad \forall \, \xi \in \R^d \, ,\quad 
\left| {\mathcal A}({\rm e}^{i\, \xi_1},\dots,{\rm e}^{i\, \xi_d})^n \right| \le C_1 \, .
\end{equation}
In particular, the spectral radius of ${\mathcal A}({\rm e}^{i\, \xi_1},\dots,{\rm e}^{i\, \xi_d})$ should not be 
larger than $1$ (the so-called von Neumann condition).
\end{proposition}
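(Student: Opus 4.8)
The plan is to pass to the spatial Fourier side, where the constant-coefficient recurrence \eqref{numcauchy} decouples into a family of scalar difference equations indexed by the frequency $\xi$, and then to read off both implications from the elementary fact that the $L^2$ operator norm of a matrix-valued Fourier multiplier equals the essential supremum of the operator norm of its symbol. First I would apply the discrete Fourier transform in the space variable to the interior equation in \eqref{numcauchy}: writing $\widehat{Q_\sigma}(\kappa)$ as in \eqref{dispersion} and evaluating at $\kappa = ({\rm e}^{i\xi_1},\dots,{\rm e}^{i\xi_d})$, the recurrence becomes, for each fixed $\xi \in \R^d$, the scalar linear recursion $\sum_{\sigma=0}^{s+1} \widehat{Q_\sigma}({\rm e}^{i\xi_1},\dots,{\rm e}^{i\xi_d})\, \hat u^{n+\sigma}(\xi) = 0$. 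Since $Q_{s+1}$ is an isomorphism, $\widehat{Q_{s+1}}$ does not vanish on $(\cercle)^d$, so one can solve for the top-order term and recast the recursion in first-order form $U^{n+1}(\xi) = \mathcal{A}({\rm e}^{i\xi_1},\dots,{\rm e}^{i\xi_d})\, U^n(\xi)$, where $U^n(\xi) := (\hat u^{n+s}(\xi),\dots,\hat u^{n}(\xi))^{\mathrm T} \in \C^{s+1}$ and $\mathcal{A}$ is exactly the companion matrix \eqref{defA2pas}; iterating gives $U^n(\xi) = \mathcal{A}({\rm e}^{i\xi_1},\dots,{\rm e}^{i\xi_d})^n\, U^0(\xi)$ with $U^0(\xi) = (\hat f^s(\xi),\dots,\hat f^0(\xi))^{\mathrm T}$. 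By Plancherel, each quantity $\Ng u^n \Nd_{-\infty,+\infty}^2$ equals $\int_{[-\pi,\pi]^d} |\hat u^n(\xi)|^2\, d\xi$ up to a fixed multiplicative constant (the mesh volume together with the normalizing $(2\pi)^d$), a constant that cancels on the two sides of \eqref{estimcauchy}; this is also where the independence of $\Delta t$ noted after Definition \ref{def2} enters.

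The sufficiency direction would then be immediate: if \eqref{unifpowerbound2} holds then $|U^n(\xi)| \le C_1\, |U^0(\xi)|$ pointwise in $\xi$, and since $\hat u^n(\xi)$ is one component of $U^n(\xi)$ while $|U^0(\xi)|^2 = \sum_{\sigma=0}^s |\hat f^\sigma(\xi)|^2$, integrating in $\xi$ and using the Plancherel identity yields \eqref{estimcauchy} with $C_0 = C_1^2$.

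The necessity direction is the delicate one, because the stability estimate a priori controls only the single component $\hat u^n$ of the vector $U^n$, whereas \eqref{unifpowerbound2} is a statement about the full matrix power; overcoming this gap is the main obstacle. The resolution I would use is that \eqref{estimcauchy} holds for \emph{every} time index: summing it over the $s+1$ consecutive indices $n,n+1,\dots,n+s$ that constitute the entries of $U^n$ produces the integrated bound $\int |\mathcal{A}({\rm e}^{i\xi})^n U^0(\xi)|^2\, d\xi \le (s+1)\, C_0 \int |U^0(\xi)|^2\, d\xi$, valid for all $n$ and all $U^0 \in L^2([-\pi,\pi]^d;\C^{s+1})$. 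The left-hand operator is a matrix-valued Fourier multiplier with symbol $\mathcal{A}({\rm e}^{i\xi})^n$, whose $L^2 \to L^2$ operator norm equals $\mathrm{esssup}_\xi |\mathcal{A}({\rm e}^{i\xi})^n|$; hence $\mathrm{esssup}_\xi |\mathcal{A}({\rm e}^{i\xi})^n| \le ((s+1)\,C_0)^{1/2}$ uniformly in $n$. Because $\widehat{Q_{s+1}}$ has no zero on the compact torus, $\xi \mapsto \mathcal{A}({\rm e}^{i\xi})^n$ is continuous, so the essential supremum is attained as a genuine supremum, giving \eqref{unifpowerbound2} with $C_1 = ((s+1)\,C_0)^{1/2}$. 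Finally, the von Neumann condition would follow from Gelfand's formula $\rho(M) = \lim_n |M^n|^{1/n}$: a uniform bound $|\mathcal{A}({\rm e}^{i\xi})^n| \le C_1$ forces $\rho(\mathcal{A}({\rm e}^{i\xi})) \le \lim_n C_1^{1/n} = 1$ at every $\xi$. Thus the only subtle step is the passage from control of the scalar trace $\hat u^n$ to control of the full matrix power; the remaining ingredients, namely the multiplier-norm identity and the continuity upgrade from essential supremum to supremum, are routine.
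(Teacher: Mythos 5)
Your proof is correct. Note that the paper itself offers no proof of Proposition~\ref{prop1}: it is stated as a well-known fact (it is essentially the classical Fourier characterization of $\ell^2$-stability for multistep schemes, cf.\ \cite{gks} or standard references), so there is no "paper's proof" to compare against; your argument is the standard one that the author is implicitly invoking. The two points that deserve scrutiny both check out: (i) in the necessity direction, you correctly bridge the gap between control of the single component $\hat u^n$ and control of the full vector $U^n$ by summing the estimate over the $s+1$ consecutive indices $n,\dots,n+s$ (each of which is separately controlled by $\sup_m$ in \eqref{estimcauchy}), which gives $\int |\mathcal A({\rm e}^{i\xi})^n U^0(\xi)|^2\,{\rm d}\xi \le (s+1)C_0 \int |U^0(\xi)|^2\,{\rm d}\xi$ for arbitrary $U^0 \in L^2$ since every such $U^0$ arises from $\ell^2$ initial data; and (ii) the identification of the multiplier norm with $\mathrm{esssup}_\xi |\mathcal A({\rm e}^{i\xi})^n|$, upgraded to a genuine supremum by continuity of $\mathcal A$ on the torus (which uses exactly the hypothesis that $Q_{s+1}$ is an isomorphism, hence $\widehat{Q_{s+1}}$ is nonvanishing there), is valid. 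The reduction modulo the mesh-volume factor and the final Gelfand-formula deduction of the von Neumann condition are also handled correctly.
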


The eigenvalues of ${\mathcal A}({\rm e}^{i\, \xi_1},\dots,{\rm e}^{i\, \xi_d})$ are the roots to the dispersion 
relation \eqref{dispersion}. When these roots are simple for all $\xi \in \R^d$, the von Neumann condition is 
both necessary and {\it sufficient} for stability of  \eqref{numcauchy}, see, e. g., \cite[Proposition 3]{jfcnotes}. 
Assumption \ref{assumption1} is therefore a way to assume that \eqref{numcauchy} is stable for the discrete 
Cauchy problem. Our goal is to derive the semigroup estimate \eqref{estimcauchy} not by applying Fourier 
transform to \eqref{numcauchy} and using uniform power boundedness of ${\mathcal A}$, but rather by 
multiplying the first equation in \eqref{numcauchy} by a suitable {\it local} multiplier. The analysis relies 
first on the simpler case where one only considers the time evolution and no additional space variable.

\subsection{Stable recurrence relations}

In this Paragraph, we consider sequences $(v^n)_{n \in \N}$ with values in $\C$. The index $n$ should be thought 
of as the discrete time variable, and we therefore introduce the new notation ${\bf T}$ for the shift operator on the 
time grid: $({\bf T}^m v)^n :=v^{n+m}$ for all $m,n\in \N$. We start with the following elementary but crucial Lemma, 
which is the analogue of \cite[Lemme 1.1]{garding}.

\begin{lemma}[The energy-dissipation balance law]
\label{lem1}
Let $P \in \C[X]$ be  a polynomial of degree $s+1$ whose roots are simple and located in $\Dbar$. Then 
there exists a positive definite Hermitian form $q_e$ on $\C^{s+1}$, and a nonnegative Hermitian form $q_d$ 
on $\C^{s+1}$, that both depend in a ${\mathcal C}^\infty$ way on $P$, such that for any sequence $(v^n)_{n \in \N}$ 
with values in $\C$, there holds
\begin{equation*}
\forall \, n \in \N \, , \, 2 \, \text{\rm Re} \Big( \overline{{\bf T} \, (P'({\bf T}) \, v^n)} \, P({\bf T}) \, v^n \Big) 
=(s+1) \, |P({\bf T}) \, v^n|^2 +({\bf T}-I) \, (q_e(v^n,\dots,v^{n+s})) +q_d(v^n,\dots,v^{n+s}) \, .
\end{equation*}
In particular, for all sequence $(v^n)_{n \in \N}$ that satisfies the recurrence relation
\begin{equation*}
\forall \, n \in \N \, ,\quad P({\bf T}) \, v^n =0 \, ,
\end{equation*}
the sequence $(q_e(v^n,\dots,v^{n+s}))_{n\in \N}$ is nonincreasing.
\end{lemma}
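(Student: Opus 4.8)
The plan is to read the claimed identity as an equality between two Hermitian forms on $\C^{s+2}$ and to reduce it to a single scalar identity on a curve. Writing $P(X)=\sum_{\sigma=0}^{s+1}a_\sigma X^\sigma$ with $a_{s+1}\neq 0$, I introduce on $\C^{s+2}$ (coordinates $w_0,\dots,w_{s+1}$, standing for $v^n,\dots,v^{n+s+1}$) the linear forms $L_0(w)=\sum_\sigma a_\sigma w_\sigma$ and $L_1(w)=\sum_{\sigma=1}^{s+1}\sigma\,a_\sigma w_\sigma$, so that the left-hand side minus $(s+1)\,|P({\bf T})v^n|^2$ is the Hermitian form $H(w):=2\,\mathrm{Re}(\overline{L_1(w)}\,L_0(w))-(s+1)\,|L_0(w)|^2$, while the right-hand side is $G(w):=q_e(w_1,\dots,w_{s+1})-q_e(w_0,\dots,w_s)+q_d(w_0,\dots,w_s)$. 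The key observation is that a Hermitian form on $\C^{s+2}$ is \emph{determined} by its restriction to the moment curve $\zeta\mapsto(1,\zeta,\dots,\zeta^{s+1})$, because the functions $\overline{\zeta}^{\,j}\zeta^{k}$ ($0\le j,k\le s+1$) are linearly independent on $\C$. Hence it suffices to produce $q_e,q_d$ for which $H$ and $G$ agree on that curve. Evaluating, $H$ restricts to $R(\zeta):=2\,\mathrm{Re}(\overline{\zeta}\,\overline{P'(\zeta)}\,P(\zeta))-(s+1)\,|P(\zeta)|^2$ and $G$ restricts to $(|\zeta|^2-1)\,Q_e(\zeta)+Q_d(\zeta)$, where $Q_e(\zeta):=q_e(1,\zeta,\dots,\zeta^s)$ and $Q_d(\zeta):=q_d(1,\zeta,\dots,\zeta^s)$, so the whole lemma reduces to the scalar identity
\begin{equation}
R(\zeta)=(|\zeta|^2-1)\,Q_e(\zeta)+Q_d(\zeta)\,,\qquad \zeta\in\C\,.\tag{$\star$}
\end{equation}

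Next I would compute $R$ explicitly through the logarithmic derivative of $P$. Factoring $P(\zeta)=a_{s+1}\prod_{k=1}^{s+1}(\zeta-z_k)$ with the $z_k$ simple and in $\Dbar$, one has $\zeta\,P'/P=\sum_k \zeta/(\zeta-z_k)$, and a one-line computation gives $2\,\mathrm{Re}(\zeta/(\zeta-z_k))-1=(|\zeta|^2-|z_k|^2)/|\zeta-z_k|^2$. Multiplying by $|P(\zeta)|^2=|a_{s+1}|^2\prod_j|\zeta-z_j|^2$ yields the clean formula
\begin{equation}
R(\zeta)=|a_{s+1}|^2\sum_{k=1}^{s+1}(|\zeta|^2-|z_k|^2)\prod_{j\neq k}|\zeta-z_j|^2\,.\tag{$\dagger$}
\end{equation}
Splitting $|\zeta|^2-|z_k|^2=(|\zeta|^2-1)+(1-|z_k|^2)$ then reads off the natural candidates
\[
Q_e(\zeta):=|a_{s+1}|^2\sum_{k=1}^{s+1}\prod_{j\neq k}|\zeta-z_j|^2\,,\qquad
Q_d(\zeta):=|a_{s+1}|^2\sum_{k=1}^{s+1}(1-|z_k|^2)\prod_{j\neq k}|\zeta-z_j|^2\,,
\]
which satisfy $(\star)$ by construction.

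The delicate point, and the step I expect to be the main obstacle, is that $(\star)$ only pins down $q_e,q_d$ on the moment curve, whereas the lemma demands that $q_e$ be positive definite and $q_d$ nonnegative as Hermitian forms on all of $\C^{s+1}$; positivity of $Q_e,Q_d$ merely as functions of $\zeta$ does not imply definiteness of the underlying forms. This is resolved by noticing that each factor is already a perfect square: setting $R_k(\zeta):=\prod_{j\neq k}(\zeta-z_j)=\sum_{m=0}^{s}r_{k,m}\zeta^m$ and $\ell_k(w):=\sum_{m=0}^{s}r_{k,m}w_m$, one has $\prod_{j\neq k}|\zeta-z_j|^2=|R_k(\zeta)|^2=|\ell_k(1,\zeta,\dots,\zeta^s)|^2$. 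Hence the honest Hermitian forms
\[
q_e(w):=|a_{s+1}|^2\sum_{k=1}^{s+1}|\ell_k(w)|^2\,,\qquad
q_d(w):=|a_{s+1}|^2\sum_{k=1}^{s+1}(1-|z_k|^2)\,|\ell_k(w)|^2
\]
restrict to $Q_e,Q_d$ and are manifestly sums of rank-one nonnegative forms. Since each $1-|z_k|^2\ge 0$, the form $q_d$ is nonnegative; and because the $z_k$ are simple, the Lagrange-type polynomials $R_k$ (of degree $s$, with $R_k(z_i)=0$ for $i\neq k$ and $R_k(z_k)\neq 0$) are linearly independent, so $\ell_1,\dots,\ell_{s+1}$ span $(\C^{s+1})^*$ and $q_e$ is positive definite.

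It then remains to record smoothness and to conclude. Smooth dependence on $P$ follows because the matrix entries of $q_e,q_d$ are invariant under permutation of $(z_1,\dots,z_{s+1})$, hence are polynomials in the elementary symmetric functions of the roots and their conjugates, i.e.\ in the coefficients of $P$ and their conjugates. With $H=G$ now established as forms on $\C^{s+2}$, the stated identity holds for every sequence. Finally, the \emph{in particular} assertion is immediate: when $P({\bf T})v^n=0$ for all $n$, the identity collapses to $({\bf T}-I)\,q_e(v^n,\dots,v^{n+s})=-\,q_d(v^n,\dots,v^{n+s})\le 0$, so the sequence $(q_e(v^n,\dots,v^{n+s}))_{n\in\N}$ is nonincreasing.
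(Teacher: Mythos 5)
Your construction lands on exactly the objects of the paper's proof: with $P_k(X)=a\,\prod_{j\neq k}(X-x_j)$ the Lagrange-type interpolation polynomials, the paper's forms are $q_e=\sum_k|P_k({\bf T})w^0|^2$ and $q_d=\sum_k(1-|x_k|^2)|P_k({\bf T})w^0|^2$, which coincide with your $|a_{s+1}|^2\sum_k|\ell_k(w)|^2$ and $|a_{s+1}|^2\sum_k(1-|z_k|^2)|\ell_k(w)|^2$; your arguments for positive definiteness (the $R_k$ are linearly independent because the roots are simple, so the $\ell_k$ span the dual) and for nonnegativity (roots in $\Dbar$) are also the paper's. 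Where you genuinely differ is in how the identity is \emph{verified}. The paper writes $P=(X-x_k)P_k$ and $P'=\sum_k P_k$ and performs a direct telescoping computation with the shift operator, arriving at $\sum_k({\bf T}-|x_k|^2)\,|P_k({\bf T})v^n|^2$ in a few lines of operator algebra. You instead view both sides as Hermitian forms on $\C^{s+2}$, invoke the fact that such a form is determined by its restriction to the moment curve $\zeta\mapsto(1,\zeta,\dots,\zeta^{s+1})$ (linear independence of the $\overline{\zeta}^{\,j}\zeta^k$, which is correct), and check the resulting scalar identity through the logarithmic derivative $\zeta P'/P=\sum_k\zeta/(\zeta-z_k)$ and the formula $2\,\mathrm{Re}\big(\zeta/(\zeta-z_k)\big)-1=(|\zeta|^2-|z_k|^2)/|\zeta-z_k|^2$. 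This route is slightly longer (it needs the determination lemma and a continuity remark to extend the logarithmic-derivative computation across the zeros of $P$), but it has the virtue of \emph{explaining} where the decomposition comes from rather than just confirming it, and it adapts immediately to the generalized multipliers $Q=\sum_k\alpha_k P_k$ discussed after the lemma.

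One point needs repair: your justification of the $\mathcal{C}^\infty$ dependence on $P$. The matrix entries of $q_e$ are sums $\sum_k\overline{r_{k,m}}\,r_{k,m'}$, invariant only under the \emph{diagonal} action of the symmetric group on the pairs $(z_j,\overline{z_j})$; such diagonal invariants are in general \emph{not} polynomials in the elementary symmetric functions of the roots together with their conjugates (already $|z_1|^2+|z_2|^2$ is not a polynomial in $z_1+z_2$, $z_1z_2$ and their conjugates). The conclusion still holds, for the reason the paper gives: since the roots are simple they depend locally holomorphically on the coefficients of $P$, and the entries are polynomial in the roots and their conjugates, hence $\mathcal{C}^\infty$ in $P$. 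With that substitution your proof is complete.
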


The fact that there exists a Hermitian norm on $\C^{s+1}$ that is nonincreasing along solutions to the 
recurrence relation is not new. In fact, it is easily seen to be a consequence of the Kreiss matrix Theorem, 
see \cite{strikwerda-wade}. However, the important point here is that we can construct a multiplier that yields 
directly the "energy boundedness" (or decay). The fact that the coefficients of this multiplier are integer 
multiples of the coefficients of $P$ will be crucial in the analysis of Section \ref{section3}, see also Proposition 
\ref{prop2} below.

\begin{proof}
We borrow some ideas from \cite[Lemme 1.1]{garding} and introduce the interpolation polynomials:
\begin{equation*}
\forall \, k=1,\dots,s+1 \, ,\quad P_k(X) :=a \, \prod_{j \neq k} (X-x_j) \, ,
\end{equation*}
where $x_1,\dots,x_{s+1}$ denote the roots of $P$, and $a \neq 0$ its dominant coefficient. Since the roots of 
$P$ are pairwise distinct, the $P_k$'s form a basis of $\C_s[X]$ and they depend in a ${\mathcal C}^\infty$ way 
on the coefficients of $P$. We have
\begin{equation*}
P'=\sum_{k=1}^{s+1} P_k \, .
\end{equation*}
We then consider a sequence $(v^n)_{n \in \N}$ with values in $\C$ and compute
\begin{align*}
2 \, \text{\rm Re} \Big( \overline{{\bf T} \, (P'({\bf T}) \, v^n)} \, P({\bf T}) \, v^n \Big) -& (s+1) \, |P({\bf T}) \, v^n|^2 \\
=& \sum_{k=1}^{s+1} \overline{{\bf T} \, (P_k({\bf T})) \, v^n} \, ({\bf T} -x_k) \, P_k({\bf T}) \, v^n 
+{\bf T} \, (P_k({\bf T}) \, v^n) \, ({\bf T} -\overline{x_k}) \, \overline{P_k({\bf T}) \, v^n} \\
&-\sum_{k=1}^{s+1} ({\bf T} -\overline{x_k}) (\overline{P_k({\bf T}) \, v^n}) \, 
({\bf T} -x_k) (P_k({\bf T}) \, v^n) \\
=& \sum_{k=1}^{s+1} ({\bf T} -|x_k|^2) \, |P_k({\bf T}) \, v^n|^2 \, .
\end{align*}
The conclusion follows by defining:
\begin{align}
\forall \, (w^0,\dots,w^s) \in \C^{s+1} \, ,\quad q_e(w^0,\dots,w^s) &:=\sum_{k=1}^{s+1} |P_k({\bf T}) \, w^0|^2 \, ,
\label{defqe}\\
q_d(w^0,\dots,w^s) &:=\sum_{k=1}^{s+1} (1-|x_k|^2) \, |P_k({\bf T}) \, w^0|^2 \, .\label{defqd}
\end{align}
The form $q_e$ is positive definite because the $P_k$'s form a basis of $\C_s[X]$. The form $q_d$ is 
nonnegative because the roots of $P$ are located in $\Dbar$. Both forms depend in a ${\mathcal C}^\infty$ 
way on the coefficients of $P$ because the roots of $P$ are simple.
\end{proof}

Lemma \ref{lem1} shows that the polynomial $P'$ yields the good multiplier ${\bf T} \, P'({\bf T}) \, v^n$ for 
the recurrence relation $P({\bf T}) \, v^n =0$. Of course, $P'$ is not the only possible choice, though it will 
be our favorite one in what follows. As in \cite[Lemme 1.1]{garding}, any polynomial of the form\footnote{The 
sign condition here on the coefficients $\alpha_k$ is the analogue of the separation condition for the roots in 
\cite{leray,garding}.}
\begin{equation*}
Q :=\sum_{k=1}^{s+1} \alpha_k \, P_k \, ,\quad \alpha_1,\dots,\alpha_{s+1} >0 \, ,
\end{equation*}
provides with an energy balance of the form
\begin{equation*}
2 \, \text{\rm Re } \Big( \overline{{\bf T} \, (Q({\bf T}) \, v^n)} \, P({\bf T}) \, v^n \Big) 
=(\alpha_1+\cdots+\alpha_{s+1}) \, |P({\bf T}) \, v^n|^2 +({\bf T}-I) \, (q_e(v^n,\dots,v^{n+s})) 
+q_d(v^n,\dots,v^{n+s}) \, ,
\end{equation*}
with suitable Hermitian forms $q_e,q_d$ that have the same properties as stated in Lemma \ref{lem1}.

\subsection{The energy-dissipation balance for finite difference schemes}

In this Paragraph, we consider the numerical scheme \eqref{numcauchy}. We introduce the following notation: 
\begin{equation}
\label{defLM}
L:= \sum_{\sigma=0}^{s+1} {\bf T}^\sigma \, Q_\sigma \, ,\quad 
M:= \sum_{\sigma=0}^{s+1} \sigma \, {\bf T}^\sigma \, Q_\sigma \, .
\end{equation}
Thanks to Fourier analysis, Lemma \ref{lem1} easily gives the following result:

\begin{proposition}[The energy-dissipation balance law]
\label{prop2}
Let Assumptions \ref{assumption0} and \ref{assumption1} be satisfied. Then there exist a continuous coercive 
quadratic form $E_0$ and a continuous nonnegative quadratic form $D_0$ on $\ell^2(\Z^d;\R)^{s+1}$ such that 
for all sequences $(v^n)_{n \in \N}$ with values in $\ell^2(\Z^d;\R)$ and for all $n \in \N$, there holds
\begin{equation*}
2 \, \langle M\, v^n,L\, v^n \rangle_{-\infty,+\infty} =(s+1) \, \Ng L\, v^n \Nd_{-\infty,+\infty}^2 
+({\bf T}-I) \, E_0(v^n,\dots,v^{n+s}) +D_0(v^n,\dots,v^{n+s}) \, .
\end{equation*}
In particular, for all initial data $f^0,\dots,f^s \in \ell^2(\Z^d;\R)$, the solution to \eqref{numcauchy} satisfies
\begin{equation*}
\sup_{n \in \N} \, E_0(v^n,\dots,v^{n+s}) \le E_0(f^0,\dots,f^s) \, ,
\end{equation*}
and \eqref{numcauchy} is ($\ell^2$-)stable.
\end{proposition}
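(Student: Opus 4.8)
The plan is to reduce the statement to the scalar Lemma \ref{lem1} by a Fourier transform in the space variable $j \in \Z^d$. For a fixed frequency $\xi \in \R^d$, the spatial Fourier transform turns each operator $Q_\sigma$ into multiplication by $\widehat{Q_\sigma}({\rm e}^{i\, \xi_1},\dots,{\rm e}^{i\, \xi_d})$, so the operators $L$ and $M$ from \eqref{defLM} act on $\widehat{v^n}(\xi)$ as $P_\xi({\bf T})$ and ${\bf T}\, P_\xi'({\bf T})$ respectively, where $P_\xi(X) := \sum_{\sigma=0}^{s+1} \widehat{Q_\sigma}({\rm e}^{i\xi})\, X^\sigma$ is exactly the polynomial of the dispersion relation \eqref{dispersion}. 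The identity $M = {\bf T}\, P_\xi'({\bf T})$ on the Fourier side holds because the $\sigma=0$ contribution to $M$ carries the factor $\sigma=0$ and drops out. By Assumption \ref{assumption0} the leading coefficient $\widehat{Q_{s+1}}({\rm e}^{i\xi})$ never vanishes, so $P_\xi$ has degree exactly $s+1$; by Assumption \ref{assumption1} its roots are simple and lie in $\Dbar$. Hence Lemma \ref{lem1} applies to $P = P_\xi$ for every $\xi$.

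Next I would apply Lemma \ref{lem1} pointwise in $\xi$ to the scalar time sequence $(\widehat{v^n}(\xi))_{n\in\N}$, obtaining for each $\xi$ the balance identity with a positive definite form $q_e^\xi$ and a nonnegative form $q_d^\xi$ on $\C^{s+1}$, both depending in a $\mathcal C^\infty$ way on $P_\xi$, hence smoothly on $\xi$. Integrating this identity in $\xi$ over $(\R/2\pi\Z)^d$ and invoking Parseval, the left-hand side becomes $2\, \langle M\, v^n, L\, v^n\rangle_{-\infty,+\infty}$ (the integral is real since $v^n$ is real-valued, so the imaginary parts cancel under $\xi\mapsto-\xi$), and the first term on the right becomes $(s+1)\, \Ng L\, v^n\Nd_{-\infty,+\infty}^2$. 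I would then define
\[
E_0(w^0,\dots,w^s) := c\int q_e^\xi\big(\widehat{w^0}(\xi),\dots,\widehat{w^s}(\xi)\big)\, d\xi, \qquad
D_0(w^0,\dots,w^s) := c\int q_d^\xi\big(\widehat{w^0}(\xi),\dots,\widehat{w^s}(\xi)\big)\, d\xi,
\]
with $c$ the Parseval normalization, which reproduces the claimed balance law.

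It remains to check that $E_0$ is a continuous coercive quadratic form and $D_0$ a continuous nonnegative one on $\ell^2(\Z^d;\R)^{s+1}$, and this is the step requiring the most care. Nonnegativity of $D_0$ is immediate from that of each $q_d^\xi$. For continuity and coercivity of $E_0$ I would use the compactness of the torus together with the continuous dependence of $q_e^\xi$ on $\xi$: since $q_e^\xi$ is positive definite for every $\xi$ and $\xi$ ranges over a compact set, its smallest and largest eigenvalues are bounded away from $0$ and $\infty$ uniformly in $\xi$, so by Parseval one gets $c_0\sum_{\sigma=0}^s \Ng w^\sigma\Nd_{-\infty,+\infty}^2 \le E_0(w^0,\dots,w^s)\le C\sum_{\sigma=0}^s \Ng w^\sigma\Nd_{-\infty,+\infty}^2$. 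The main obstacle is precisely this uniformity: it relies on Assumptions \ref{assumption0} and \ref{assumption1} holding for \emph{all} $\xi$, which keeps $P_\xi$ in the regime where Lemma \ref{lem1} delivers forms depending smoothly, hence continuously and with uniform two-sided bounds, on $\xi$.

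Finally, for the consequences: a solution of \eqref{numcauchy} satisfies $L\, v^n = 0$ for all $n$, so the balance law degenerates to $({\bf T}-I)\, E_0(v^n,\dots,v^{n+s}) = -D_0(v^n,\dots,v^{n+s}) \le 0$, whence $\big(E_0(v^n,\dots,v^{n+s})\big)_n$ is nonincreasing and bounded above by $E_0(f^0,\dots,f^s)$. Combining monotonicity with the two-sided coercivity of $E_0$ yields, for every $n$, the chain $c_0\, \Ng v^n\Nd_{-\infty,+\infty}^2 \le E_0(v^n,\dots,v^{n+s}) \le E_0(f^0,\dots,f^s) \le C\sum_{\sigma=0}^s \Ng f^\sigma\Nd_{-\infty,+\infty}^2$, which is exactly the estimate \eqref{estimcauchy} and identifies \eqref{numcauchy} as $\ell^2$-stable in the sense of Definition \ref{def2}.
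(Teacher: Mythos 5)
Your proposal is correct and follows essentially the same route as the paper: spatial Fourier transform reducing $L$ and $M$ to $P_\xi({\bf T})$ and ${\bf T}\,P_\xi'({\bf T})$, pointwise application of Lemma \ref{lem1}, integration via Plancherel to define $E_0$ and $D_0$, and a compactness argument over the (compact) frequency domain for the uniform coercivity of $E_0$. The deduction of monotonicity of the energy and of $\ell^2$-stability is likewise the paper's argument.
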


\begin{proof}
We use the same notation $v^n$ for the sequence $(v_j^n)_{j \in \Z^d}$ and the corresponding step function on 
$\R^d$ whose value on the cell $[j_1 \, \Delta x_1,(j_1+1) \, \Delta x_1) \times \cdots \times [j_d \, \Delta x_d,(j_d+1) 
\, \Delta x_d)$ equals $v_j^n$. Then Plancherel Theorem gives
\begin{multline*}
2 \, \langle M\, v^n,L\, v^n \rangle_{-\infty,+\infty}-(s+1) \, \Ng L\, v^n \Nd_{-\infty,+\infty}^2 \\
=\int_{\R^d} 2 \, \text{\rm Re} \Big( \overline{{\bf T} \, (P_\zeta'({\bf T}) \, \widehat{v^n}(\xi))} \, P_\zeta({\bf T}) \, 
\widehat{v^n} (\xi) \Big) -(s+1) \, |P_\zeta({\bf T}) \, \widehat{v^n}(\xi)|^2 \, \dfrac{{\rm d}\xi}{(2\, \pi)^d} \, ,
\end{multline*}
where $\widehat{v^n}$ denotes the Fourier transform of $v^n$, and where we have let
\begin{equation*}
P_\zeta (z):=\sum_{\sigma=0}^{s+1} \widehat{Q_\sigma} \big( {\rm e}^{i\, \zeta_1},\dots,{\rm e}^{i\, \zeta_d} \big) 
\, z^\sigma \, ,\quad \zeta_j := \xi_j \, \Delta x_j \, ,
\end{equation*}
and $P'_\zeta(z)$ denotes the derivative of $P_\zeta$ with respect to $z$.

From Assumption \ref{assumption1}, we know that for all $\zeta \in \R^d$, $P_\zeta$ has degree $s+1$ and has 
$s+1$ simple roots in $\Dbar$. We can apply Lemma \ref{lem1} and get
\begin{multline*}
2 \, \langle M\, v^n,L\, v^n \rangle_{-\infty,+\infty}-(s+1) \, \Ng L\, v^n \Nd_{-\infty,+\infty}^2 \\
=\int_{\R^d} ({\bf T}-I) \, q_{e,\zeta}(\widehat{v^n}(\xi),\dots,\widehat{v^{n+s}}(\xi)) 
+q_{d,\zeta}(\widehat{v^n}(\xi),\dots,\widehat{v^{n+s}}(\xi)) \, \dfrac{{\rm d}\xi}{(2\, \pi)^d} \, ,
\end{multline*}
where $q_{e,\zeta},q_{d,\zeta}$ depend in a ${\mathcal C}^\infty$ way on $\zeta \in \R^d$ and are $2\, \pi$-periodic 
in each $\zeta_j$. Furthermore, $q_{e,\zeta}$ is positive definite and $q_{d,\zeta}$ is nonnegative. The conclusion of 
Proposition \ref{prop2} follows by a standard compactness argument for showing coercivity of $E_0$.
\end{proof}

\subsection{Examples}

The first basic example corresponds to the case $s=0$ for which the multiplier provided by Proposition 
\ref{prop2} is $Q_1 \, v_j^{n+1}$. In that case, the energy $E_0$ reads $\Ng Q_1\, v \Nd_{-\infty,+\infty}^2$ 
(recall that $Q_1$ is an isomorphism) and the energy-dissipation balance law is nothing but the trivial identity
\begin{align*}
2 \, \langle Q_1\, v^{n+1},Q_1\, v^{n+1} +Q_0 \, v^n \rangle_{-\infty,+\infty} =& \, 
\Ng Q_1\, v^{n+1} +Q_0 \, v^n \Nd_{-\infty,+\infty}^2 \\
&+\Ng Q_1\, v^{n+1} \Nd_{-\infty,+\infty}^2 -\Ng Q_0\, v^n \Nd_{-\infty,+\infty}^2 \, .
\end{align*}
The second line of this algebraic identity can be rewritten as
\begin{equation*}
\Ng Q_1\, v^{n+1} \Nd_{-\infty,+\infty}^2 -\Ng Q_1\, v^n \Nd_{-\infty,+\infty}^2 
+\Ng Q_1\, v^n \Nd_{-\infty,+\infty}^2 -\Ng Q_0\, v^n \Nd_{-\infty,+\infty}^2 \, ,
\end{equation*}
and $\ell^2$-stability for the Cauchy problem amounts to assuming that the operator norm of $Q_1^{-1} \, Q_0$ 
is not larger than $1$. Hence the dissipation term $\Ng Q_1\, v^n \Nd_{-\infty,+\infty}^2 
-\Ng Q_0\, v^n \Nd_{-\infty,+\infty}^2$ is nonnegative.
\bigskip

Let us now consider the leap-frog scheme in one space dimension, for which we have $s=1$ and
\begin{equation*}
L={\bf T}^2 +\lambda \, a \, {\bf T} \, ({\bf S}-{\bf S}^{-1}) -I \, .
\end{equation*}
The corresponding dispersion relation \eqref{dispersion} reduces to
\begin{equation*}
z^2 +2\, i \, \lambda \, a \, \sin \xi \, z-1 =0 \, .
\end{equation*}
For $\lambda \, |a|<1$, the latter equation has two simple roots $x_1(\xi),x_2(\xi)$ of modulus $1$. Following 
the previous analysis, see \eqref{defqe}-\eqref{defqd}, the form $q_{e,\zeta}$ is given by
\begin{equation*}
q_{e,\zeta}(w^0,w^1) =|w^1 -x_1(\zeta) \, w^0|^2 +|w^1 -x_2(\zeta) \, w^0|^2 
=2\, |w^0|^2 +2\, |w^1|^2 +4\, \lambda \, a \, \text{\rm Re} \big( i\, \sin \zeta \, \overline{w^1} \, w^0 \big) \, ,
\end{equation*}
and $q_{d,\zeta}$ is zero. The associated forms in Proposition \ref{prop2} are $D_0 \equiv 0$ and (recall here $d=1$):
\begin{equation*}
E_0(v^0,v^1) =2 \, \sum_{j \in \Z} \Delta x \, (v^0_j)^2 +2 \, \sum_{j \in \Z} \Delta x \, (v^1_j)^2 
+2\, \lambda \, a \, \sum_{j \in \Z} \Delta x \, (v^0_{j+1}-v^0_{j-1}) \, v^1_j \, .
\end{equation*}
The latter energy functional $E_0$ is coercive under the condition $\lambda \, |a|<1$, which is the necessary 
and sufficient condition of stability for the leap-frog scheme, and $E_0$ is conserved for solutions to the 
leap-frog scheme. The conservation of $E_0$ is usually proved by starting from the recurrence relation
\begin{equation*}
\forall \, j \in \Z \, ,\quad \forall \, n \in \N \, ,\quad 
u_j^{n+2} +\lambda \, a \, (u_{j+1}^{n+1}-u_{j-1}^{n+1}) -u_j^n =0 \, ,
\end{equation*}
using the multiplier $u_j^{n+2}+u_j^n$, and summing with respect to $j$. This is equivalent, for solutions to the 
leap-frog scheme, to what we propose here, since our multiplier reads
\begin{equation*}
M \, u_j^n =2\, u_j^{n+2} +\lambda \, a \, (u_{j+1}^{n+1}-u_{j-1}^{n+1}) 
=u_j^{n+2}+u_j^n +\underbrace{L\, u_j^n}_{=0} \, .
\end{equation*}
However, it will appear more clearly in Section \ref{section3} why our choice for $M \, u_j^n$ has a major advantage 
when considering initial boundary value problems.

Let us observe here that the energy functional $E_0$ is associated with a local energy density
\begin{equation*}
E_{0,j}(v^0,v^1) := 2\, (v^0_j)^2 +2\, (v^1_j)^2 +2 \, \lambda \, a \, (v^0_{j+1}-v^0_{j-1}) \, v^1_j \, .
\end{equation*}
This is very specific to the leap-frog scheme. In general, the coefficients of the Hermitian forms $q_{e,\zeta},q_{d,\zeta}$ 
are not trigonometric polynomials of $\zeta$ and therefore $E_0,D_0$ do not necessarily admit local densities. 
This is one main difference with \cite{leray,garding}.

\section{Semigroup estimates for fully discrete initial boundary value problems}
\label{section3}

We now turn to the proof of Theorem \ref{mainthm} for which we shall use the results of Section \ref{section2} 
as a toolbox. By linearity of \eqref{numibvp}, it is sufficient to prove Theorem \ref{mainthm} separately in the 
case $(f_j^0)=\cdots=(f_j^s)=0$, and in the case $(F_j^n)=0$, $(g_j^n)=0$. The latter case is the most difficult 
and requires the introduction of an auxiliary set of \lq \lq dissipative\rq \rq $\,$ boundary conditions. Solutions 
to \eqref{numibvp} are always assumed to be real valued, which means that the data are real valued. For 
complex valued initial data and/or forcing terms, one just uses the linearity of \eqref{numibvp}.

\subsection{The case with zero initial data}

We first assume $(f_j^0)=\cdots=(f_j^s)=0$. By strong stability, we already know that \eqref{stabilitenumibvp} 
holds with a constant $C$ that is independent of $\gamma>0$ and $\Delta t \in (0,1]$. Therefore, proving 
Theorem \ref{mainthm} amounts to showing the existence of a constant $C$, that is independent of $\gamma>0$ 
and $\Delta t \in (0,1]$ such that the solution to \eqref{numibvp} with $(f_j^0)=\cdots=(f_j^s)=0$ satisfies
\begin{multline}
\label{estimnumibvp'}
\sup_{n \ge 0} \, {\rm e}^{-2\, \gamma \, n\, \Delta t} \, \Ng u^n \Nd_{1-r_1,+\infty}^2 
\le C \, \left\{ \dfrac{\gamma \, \Delta t+1}{\gamma} \, 
\sum_{n\ge s+1} \Delta t \, {\rm e}^{-2\, \gamma \, n\, \Delta t} \, \Ng F^n \Nd_{1,+\infty}^2 \right. \\
\left. +\sum_{n\ge s+1} \Delta t \, {\rm e}^{-2\, \gamma \, n\, \Delta t} \, \sum_{j_1=1-r_1}^0 
\| g_{j_1,\cdot}^n \|_{\ell^2(\Z^{d-1})}^2 \right\} \, .
\end{multline}
We thus consider a parameter $\gamma>0$ and a time step $\Delta t \in (0,1]$, and focus on the numerical scheme 
\eqref{numibvp} with zero initial data (that is, $(f_j^0)=\cdots=(f_j^s)=0$). For all $n \in \N$, we extend the sequence 
$(u_j^n)$ by zero for $j_1 \le -r_1$:
\begin{equation*}
v_j^n :=\begin{cases}
u_j^n &\text{\rm if } j_1 \ge 1-r_1 \, ,\\
0 &\text{\rm otherwise.}
\end{cases}
\end{equation*}
We use Proposition \ref{prop2} and compute:
\begin{equation*}
({\bf T}-I) \, E_0(v^n,\dots,v^{n+s}) +D_0(v^n,\dots,v^{n+s}) =
2 \, \langle M\, v^n,L\, v^n \rangle_{-\infty,+\infty} -(s+1) \, \Ng L\, v^n \Nd_{-\infty,+\infty}^2  \, .
\end{equation*}
Due to the form of the operator $L$, see \eqref{defLM}, and the fact that $v_j^n$ vanishes for $j_1 \le -r_1$, there 
holds:
\begin{equation*}
L\, v_j^n =\begin{cases}
\Delta t \, F_j^{n+s+1} &\text{\rm if } j_1 \ge 1 \, ,\\
0 &\text{\rm if } j_1 \le -r_1-p_1 \, ,
\end{cases}
\end{equation*}
and we thus get
\begin{align*}
({\bf T}-I) \, E_0(v^n,\dots,v^{n+s}) \, +& \, D_0(v^n,\dots,v^{n+s}) \\
=& \left( \prod_{k=1}^d \Delta x_k \right) \, \sum_{j_1 \ge 1} \sum_{j' \in \Z^{d-1}} 
2 \, \Delta t \, (M \, v_j^n) \, F_j^{n+s+1} -(s+1) \, \Delta t^2 \, (F_j^{n+s+1})^2 \\
&+ \left( \prod_{k=1}^d \Delta x_k \right) \, \sum_{j_1=1-r_1-p_1}^0 \sum_{j' \in \Z^{d-1}} 
2 \, (M \, v_j^n) \, L\, v_j^n -(s+1) \, (L\, v_j^n)^2 \, .
\end{align*}
We multiply the latter equality by $\exp(-2\, \gamma \, (n+s+1) \, \Delta t)$, sum with respect to $n$ 
from $0$ to some $N$ and use the fact that $D_0$ is nonnegative. Recalling that the initial data in 
\eqref{numibvp} vanish, we get
\begin{multline}
\label{energy1}
{\rm e}^{-2\, \gamma \, (N+s+1) \, \Delta t} \, E_0 \big( v^{N+1},\dots,v^{N+s+1} \big) 
+\underbrace{\big( 1-{\rm e}^{-2\, \gamma \, \Delta t} \big) \, \sum_{n=1}^N {\rm e}^{-2\, \gamma \, (n+s) \, \Delta t} \, 
E_0 (v^n,\dots,v^{n+s})}_{\ge 0} \\
\le S_{1,N} +S_{2,N} \, ,
\end{multline}
with
\begin{equation}
\label{defS1Ngamma}
S_{1,N} :=\sum_{n=0}^N {\rm e}^{-2\, \gamma \, (n+s+1) \, \Delta t} \, \Big( 2 \, \Delta t \, 
\langle M\, v^n,F^{n+s+1} \rangle_{1,+\infty} -(s+1) \, \Delta t^2 \, \Ng F^{n+s+1} \Nd_{1,+\infty}^2 \Big) \, ,
\end{equation}
and
\begin{equation}
\label{defS2Ngamma}
S_{2,N} :=\left( \prod_{k=1}^d \Delta x_k \right) \, \sum_{n=0}^N {\rm e}^{-2\, \gamma \, (n+s+1) \, \Delta t} 
\, \sum_{j_1=1-r_1-p_1}^0 \sum_{j' \in \Z^{d-1}} 2 \, (M \, v_j^n) \, L\, v_j^n -(s+1) \, (L\, v_j^n)^2 \, .
\end{equation}

Let us now estimate the two source terms $S_{1,N},S_{2,N}$ in \eqref{energy1}. We begin with the term $S_{2,N}$ 
defined in \eqref{defS2Ngamma}. Let us recall that the ratio $\Delta t/\Delta x_1$ is fixed. Furthermore, the form of 
the operators $L$ and $M$ in \eqref{defLM} gives the estimate (recall that $v_j^n$ vanishes for $j_1 \le -r_1$):
\begin{equation*}
S_{2,N} \le C \, \Delta t \, \left( \prod_{k=2}^d \Delta x_k \right) \, \sum_{n=0}^N 
{\rm e}^{-2\, \gamma \, (n+s+1) \, \Delta t} \, \sum_{j_1=1-r_1}^{p_1} \sum_{j' \in \Z^{d-1}} 
(u_j^n)^2 +\cdots +(u_j^{n+s+1})^2 \, ,
\end{equation*}
for a constant $C$ that does not depend on $N$, $\gamma$ nor on $\Delta t$. We thus have, uniformly with respect 
to $N \in \N$, $\gamma>0$ and $\Delta t \in (0,1]$:
\begin{align*}
S_{2,N} &\le C \, \sum_{n=s+1}^{N+s+1} \Delta t \, {\rm e}^{-2\, \gamma \, n \, \Delta t} \, 
\sum_{j_1=1-r_1}^{p_1} \| u_{j_1,\cdot}^n \|_{\ell^2 (\Z^{d-1})}^2 
\le C \, \sum_{n \ge s+1} \Delta t \, {\rm e}^{-2\, \gamma \, n \, \Delta t} \, 
\sum_{j_1=1-r_1}^{p_1} \| u_{j_1,\cdot}^n \|_{\ell^2 (\Z^{d-1})}^2 \\
&\le C \, \left\{ \dfrac{\gamma \, \Delta t+1}{\gamma} \, 
\sum_{n\ge s+1} \Delta t \, {\rm e}^{-2\, \gamma \, n\, \Delta t} \, \Ng F^n \Nd_{1,+\infty}^2 
+\sum_{n\ge s+1} \Delta t \, {\rm e}^{-2\, \gamma \, n\, \Delta t} \, \sum_{j_1=1-r_1}^0 
\| g_{j_1,\cdot}^n \|_{\ell^2(\Z^{d-1})}^2 \right\} \, ,
\end{align*}
where we have used the trace estimate \eqref{stabilitenumibvp} that follows from the strong stability assumption.

Let us now focus on the term $S_{1,N}$ in \eqref{energy1}, see the defining equation \eqref{defS1Ngamma}. 
We use the Cauchy-Schwarz inequality and derive (using now the interior estimate in \eqref{stabilitenumibvp} 
that follows from the strong stability assumption):
\begin{align*}
S_{1,N} &\le 2 \, \sum_{n=0}^N \Delta t \, {\rm e}^{-2\, \gamma \, (n+s+1) \, \Delta t} \, 
\Ng M\, v^n \Nd_{1,+\infty} \, \Ng F^{n+s+1} \Nd_{1,+\infty} \\
&\le C \, \sum_{n=0}^N \Delta t \, {\rm e}^{-2\, \gamma \, (n+s+1) \, \Delta t} \, \Big( 
\Ng v^{n+1} \Nd_{1-r_1,+\infty} +\cdots +\Ng v^{n+s+1} \Nd_{1-r_1,+\infty} \Big) \, \Ng F^{n+s+1} \Nd_{1,+\infty} \\
&\le C\, \dfrac{\gamma}{\gamma \, \Delta t+1} \, 
\sum_{n=s+1}^{N+s+1} \Delta t \, {\rm e}^{-2\, \gamma \, n \, \Delta t} \, \Ng u^n \Nd_{1-r_1,+\infty}^2 
+C \, \dfrac{\gamma \, \Delta t+1}{\gamma} \, \sum_{n=s+1}^{N+s+1} \Delta t \, 
{\rm e}^{-2\, \gamma \, n \, \Delta t} \, \Ng F^n \Nd_{1,+\infty}^2 \\
&\le C \, \left\{ \dfrac{\gamma \, \Delta t+1}{\gamma} \, 
\sum_{n\ge s+1} \Delta t \, {\rm e}^{-2\, \gamma \, n\, \Delta t} \, \Ng F^n \Nd_{1,+\infty}^2 
+\sum_{n\ge s+1} \Delta t \, {\rm e}^{-2\, \gamma \, n\, \Delta t} \, \sum_{j_1=1-r_1}^0 
\| g_{j_1,\cdot}^n \|_{\ell^2(\Z^{d-1})}^2 \right\} \, .
\end{align*}
Ignoring the nonnegative term on the left hand-side of \eqref{energy1} and using the coercivity of $E_0$, we 
have proved that there exists a constant $C>0$ that is uniform with respect to $N,\gamma,\Delta t$ such that:
\begin{multline*}
{\rm e}^{-2\, \gamma \, (N+s+1) \, \Delta t} \, \Ng v^{N+s+1} \Nd_{-\infty,+\infty}^2 \le C \, 
\left\{ \dfrac{\gamma \, \Delta t+1}{\gamma} \, 
\sum_{n\ge s+1} \Delta t \, {\rm e}^{-2\, \gamma \, n\, \Delta t} \, \Ng F^n \Nd_{1,+\infty}^2 \right. \\
\left. +\sum_{n\ge s+1} \Delta t \, {\rm e}^{-2\, \gamma \, n\, \Delta t} \, \sum_{j_1=1-r_1}^0 
\| g_{j_1,\cdot}^n \|_{\ell^2(\Z^{d-1})}^2 \right\} \, ,
\end{multline*}
which yields \eqref{estimnumibvp'} and therefore the validity of Theorem \ref{mainthm} in the case of zero 
initial data.

\subsection{Construction of dissipative boundary conditions}

In this paragraph, we consider an auxiliary problem for which we shall be able to prove simultaneously 
an optimal semigroup estimate and a trace estimate for the solution. More precisely, we shall prove the 
following result.

\begin{theorem}
\label{absorbing}
Let Assumptions \ref{assumption0}, \ref{assumption1} and \ref{assumption2} be satisfied. Then for all $P_1 
\in \N$, there exists a constant $C_{P_1}>0$ such that, for all initial data $(f^0_j),\dots,(f^s_j) \in \ell^2 (\Z^d)$ 
and for all source term $(g_j^n)_{j_1 \le 0,n \ge s+1}$ that satisfies
\begin{equation*}
\forall \, \Gamma >0 \, ,\quad \sum_{n\ge s+1} {\rm e}^{-2\, \Gamma \, n} \, \sum_{j_1 \le 0} 
\| g_{j_1,\cdot}^n \|_{\ell^2(\Z^{d-1})}^2 < +\infty \, ,
\end{equation*}
there exists a unique sequence $(u_j^n)_{j \in \Z^d,n\in \N}$ solution to
\begin{equation}
\label{numabsorbing}
\begin{cases}
L \, u_j^n =0 \, ,& j \in \Z^d \, ,\quad j_1 \ge 1\, ,\quad n\ge 0 \, ,\\
M \, u_j^n =g_j^{n+s+1} \, ,& j \in \Z^d \, ,\quad j_1 \le 0\, ,\quad n\ge 0 \, ,\\
u_j^n = f_j^n \, ,& j \in \Z^d \, ,\quad n=0,\dots,s \, .
\end{cases}
\end{equation}
Moreover for all $\gamma>0$ and $\Delta t \in (0,1]$, this solution satisfies
\begin{multline}
\label{estimabsorb}
\sup_{n \ge 0} \, {\rm e}^{-2\, \gamma \, n\, \Delta t} \, \Ng u^n \Nd_{-\infty,+\infty}^2 
+\dfrac{\gamma}{\gamma \, \Delta t+1} \, 
\sum_{n\ge 0} \Delta t \, {\rm e}^{-2\, \gamma \, n\, \Delta t} \, \Ng u^n \Nd_{-\infty,+\infty}^2 
+\sum_{n\ge 0} \Delta t \, {\rm e}^{-2\, \gamma \, n\, \Delta t} \, \sum_{j_1=1-r_1}^{P_1} 
\| u_{j_1,\cdot}^n \|_{\ell^2(\Z^{d-1})}^2 \\
\le C_{P_1} \, \left\{ \sum_{\sigma=0}^s \Ng f^\sigma \Nd_{-\infty,+\infty}^2 
+\sum_{n\ge s+1} \Delta t \, {\rm e}^{-2\, \gamma \, n\, \Delta t} \, \sum_{j_1 \le 0} 
\| g_{j_1,\cdot}^n \|_{\ell^2(\Z^{d-1})}^2 \right\} \, .
\end{multline}
\end{theorem}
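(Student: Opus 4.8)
The plan is to read \eqref{numabsorbing} as a transmission problem between the interior recurrence $Lu=0$ on $j_1\ge 1$ and the boundary recurrence $Mu=g$ on $j_1\le 0$, and to extract the three pieces of \eqref{estimabsorb} from the energy--dissipation balance of Proposition \ref{prop2}, reserving a normal-mode argument for the trace term only. No strong-stability hypothesis will be used: the whole point of the choice $Mu=g$ is that it makes the boundary strictly dissipative. I would first settle existence and uniqueness by time marching. At the top time level $\sigma=s+1$ both $L$ and $M$ carry the \emph{same} operator $Q_{s+1}$ (with weights $1$ and $s+1$), so with $w:=u^{n+s+1}$ the two equations read
\[
Q_{s+1}\, w_j =\Phi_j \quad (j_1\ge 1)\, ,\qquad (s+1)\, Q_{s+1}\, w_j =\Psi_j \quad (j_1\le 0)\, ,
\]
where $\Phi,\Psi$ involve only the already-computed levels $u^n,\dots,u^{n+s}$ and the datum $g^{n+s+1}$. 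These amount to a single equation $Q_{s+1}\, w=\Phi'$ on all of $\Z^d$ with $\Phi'\in\ell^2(\Z^d)$, which has a unique $\ell^2$ solution since $Q_{s+1}$ is an isomorphism (Assumption \ref{assumption0}). Iterating in $n$ yields the unique solution, and the summability hypothesis on $g$ places it in the weighted space of \eqref{estimabsorb}.

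For the estimate I would apply Proposition \ref{prop2} directly to $u$ (no truncation is needed, $u$ being globally defined and $\ell^2$). Because $Lu^n$ vanishes for $j_1\ge 1$ and $Mu^n=g^{n+s+1}$ for $j_1\le 0$, the cross term collapses to $\langle M u^n,L u^n\rangle_{-\infty,+\infty}=\langle g^{n+s+1},L u^n\rangle_{-\infty,0}$ and $\Ng L u^n\Nd_{-\infty,+\infty}=\Ng L u^n\Nd_{-\infty,0}$. Completing the square in the identity of Proposition \ref{prop2} gives
\[
({\bf T}-I)\, E_0(u^n,\dots,u^{n+s})+D_0(u^n,\dots,u^{n+s})+(s+1)\, \Ng L u^n-\tfrac{1}{s+1}\, g^{n+s+1}\Nd_{-\infty,0}^2=\tfrac{1}{s+1}\, \Ng g^{n+s+1}\Nd_{-\infty,0}^2\, .
\]
Multiplying by ${\rm e}^{-2\gamma(n+s+1)\Delta t}$ and summing, Abel summation turns the telescopic term $({\bf T}-I)E_0$ into a boundary contribution ${\rm e}^{-2\gamma(N+s+1)\Delta t}E_0(u^{N+1},\dots,u^{N+s+1})$, an initial contribution $E_0(f^0,\dots,f^s)\le C\sum_\sigma\Ng f^\sigma\Nd_{-\infty,+\infty}^2$, and a positive bulk term carrying the factor $1-{\rm e}^{-2\gamma\Delta t}\asymp \gamma\Delta t/(\gamma\Delta t+1)$. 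Coercivity of $E_0$ then produces, after reindexing the time slices, both $\sup_n{\rm e}^{-2\gamma n\Delta t}\Ng u^n\Nd_{-\infty,+\infty}^2$ and the interior norm $\tfrac{\gamma}{\gamma\Delta t+1}\sum_n\Delta t\, {\rm e}^{-2\gamma n\Delta t}\Ng u^n\Nd_{-\infty,+\infty}^2$, together with the two nonnegative dissipation sums of $D_0$ and of $(s+1)\Ng L u^n-\tfrac{1}{s+1}g^{n+s+1}\Nd_{-\infty,0}^2$, all bounded by the data terms on the right of \eqref{estimabsorb}.

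The crux is the trace term. From the dissipation just obtained, the triangle inequality controls, by the data alone,
\[
\sum_{n}\Delta t\, {\rm e}^{-2\gamma(n+s+1)\Delta t}\sum_{j_1\le 0}\|(L u^n)_{j_1,\cdot}\|_{\ell^2(\Z^{d-1})}^2\, ,
\]
and it remains to convert this control of $L u^n$ on the layers $j_1\le 0$ into a genuine trace bound for $u_{j_1,\cdot}^n$ on $1-r_1\le j_1\le P_1$ with full strength, i.e. with coefficient $1$ and no loss of a power of $\gamma$. Here the weight shift is harmless: since $L u^n$ reaches time level $n+s+1$, the trace of $u$ at level $m=n+s+1$ is what couples to it, and the weights realign. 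This is precisely the gain the interior norm cannot give, and it is where Assumption \ref{assumption2} enters. I would pass to the Laplace transform in $n$ ($|z|>1$) and the Fourier transform in $j'$ ($\eta$), reducing to the scalar spatial recurrence $\sum_{\ell_1}a_{\ell_1}(z,\eta)\, \widehat{u}_{j_1+\ell_1}=\widehat{L u}_{j_1}$ on $\Z$, whose right-hand side is supported in $j_1\le 0$. Assumption \ref{assumption1} ensures that for $|z|>1$ the characteristic roots $\kappa_1(z,\eta)$ avoid $\cercle$, giving a stable/unstable splitting and a Green's function decaying away from the source, while Assumption \ref{assumption2} ($a_{-r_1},a_{p_1}\neq 0$ on $\Ubar\times\R^{d-1}$) makes the recurrence genuinely of order $r_1+p_1$ and solvable for the extreme unknowns with coefficients bounded for $|z|\ge 1$. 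Solving the interface problem near $j_1=0$ and marching the homogeneous recurrence up to $j_1=P_1$ yields the trace bound with a constant $C_{P_1}$ growing with $P_1$; Plancherel in $n$ and $j'$ then returns the weighted trace estimate.

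The main obstacle I anticipate is exactly this last conversion: establishing the trace bound \emph{uniformly for all $|z|\ge 1$} (equivalently, for all $\gamma>0$ and $\Delta t\in(0,1]$), with no loss in $\gamma$. The energy method alone only delivers the weaker interior norm; the full-strength trace requires controlling the Green's function of the noncharacteristic recurrence uniformly up to $\cercle$, where some roots $\kappa_1$ may approach the unit circle, and verifying that the interface problem induced by $M$ is uniformly solvable there. Assumptions \ref{assumption1} and \ref{assumption2} are tailored to make these two points work, and checking them carefully near $\cercle$ is the delicate part of the argument.
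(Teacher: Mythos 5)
Your time-marching argument for existence/uniqueness and your energy step are essentially the paper's: applying Proposition \ref{prop2} to $u$ directly, using $Lu^n=0$ for $j_1\ge 1$ and $Mu^n=g^{n+s+1}$ for $j_1\le 0$, and completing the square (the paper uses Young's inequality, which is the same computation) yields exactly the paper's intermediate estimate, giving the sup term, the interior term, and a controlled dissipation $\sum_n \Delta t\,{\rm e}^{-2\gamma(n+s+1)\Delta t}\sum_{j_1}\|L u^n_{j_1,\cdot}\|^2$. You also correctly identify the trace estimate as the crux and correctly observe that the energy method alone cannot produce it.

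However, your proposed mechanism for the trace bound has a genuine gap, and it sits precisely at the point you flag as ``delicate'' and then leave unresolved. You propose to solve the spatial recurrence $\sum_{\ell_1}a_{\ell_1}(z,\eta)\,\widehat{u}_{j_1+\ell_1}=\widehat{Lu}_{j_1}$ via a stable/unstable splitting of its characteristic roots, asserting that Assumption \ref{assumption1} keeps them off $\cercle$ for $|z|>1$. That is true for $|z|>1$ but the splitting is \emph{not uniform} as $|z|\to 1$: at glancing points $z\in\cercle$ the companion matrix $\LL(z,\eta)$ of the $L$-recurrence genuinely acquires eigenvalues on $\cercle$ (the paper states this explicitly), the Green's function loses its exponential decay, and no constant uniform in $\gamma$ comes out of the $L$-recurrence alone. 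The missing idea — which is the heart of the paper's Lemma \ref{lem3} — is that one must exploit a \emph{second} piece of information on $j_1\le 0$, namely that the quantity $\sum_{\ell_1}z\,\partial_z a_{\ell_1}(z,\eta)\,\widehat{u}_{j_1+\ell_1}$ (the symbol of the multiplier $M$) is also controlled by the data (the paper's Lemma \ref{lem3'} records both controls). The companion matrix $\M(z,\eta)$ built from the coefficients $z\,\partial_z a_{\ell_1}$ has \emph{no} eigenvalues on $\cercle$ even for $z\in\cercle$: this follows from the Gauss--Lucas theorem applied to the dispersion polynomial, whose roots are simple and lie in $\Dbar$ by Assumption \ref{assumption1}, so its $z$-derivative has all roots in the open disk $\D$. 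The paper's compactness argument then annihilates the limit vector $\underline{W}_0$ by combining $\Pi^s_\M\underline{W}_0=0$ (from the $M$-recurrence, which sees the stable \emph{and} central directions) with $\Pi^u_\LL\underline{W}_0=0$ (from the $L$-recurrence on $j_1\ge 0$). Without this use of the derivative symbol, the central (glancing) components are simply not controlled, and the uniform-in-$\gamma$ trace estimate fails. A secondary omission: the compactness argument requires $|z|$ to stay in a bounded set, so the paper first disposes of the regime $\gamma\Delta t\ge\ln R_0$ by a direct absorption argument using the invertibility of $Q_{s+1}$, and only then runs the normal-mode analysis for $\gamma\Delta t\in(0,\ln R_0]$; your sketch does not make this splitting.
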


Theorem \ref{absorbing} justifies why we advocate the choice $M\, u_j^n =2\, u_j^{n+2} +\lambda \, a \, 
(u_{j+1}^{n+1} -u_{j-1}^{n+1})$ rather than the more standard $u_j^{n+2}+u_j^n$ as a multiplier for the 
leap-frog scheme. Despite repeated efforts, we have not been able to prove the estimate \eqref{estimabsorb} 
when using the numerical boundary condition $u_j^{n+2}+u_j^n$ on $j_1 \le 0$, in conjunction with the 
leap-frog scheme on $j_1 \ge 1$.

\begin{proof}
Let us first quickly observe that the solution to \eqref{numabsorbing} is well-defined since, as long as we have 
determined the solution up to a time index $n+s$, $n \ge 0$, then $u^{n+s+1}$ is sought as a solution to an 
equation of the form
\begin{equation*}
Q_{s+1} \, u^{n+s+1} =F \, ,
\end{equation*}
where $F$ belongs to $\ell^2(\Z^d)$ (this is due to the form of $L$ and $M$, see \eqref{defLM}). Hence $u^n$ 
is uniquely defined and belongs to $\ell^2(\Z^d)$ for all $n \in \N$.

The proof of Theorem \ref{absorbing} starts again with the application of Proposition \ref{prop2}. Using 
the nonnegativity of the dissipation form $D_0$, we get\footnote{Since $L\, u_j^n=0$ for $j_1 \ge 1$, one 
could also write $\Ng L\, u^n \Nd_{-\infty,0}^2$ rather than $\Ng L\, u^n \Nd_{-\infty,+\infty}^2$ on the left 
hand-side of the inequality.}
\begin{equation*}
({\bf T}-I) \, E_0(u^n,\dots,u^{n+s}) +(s+1) \, \Ng L\, u^n \Nd_{-\infty,+\infty}^2 
\le 2 \, \langle M\, u^n,L\, u^n \rangle_{-\infty,+\infty} =2 \, \langle g^{n+s+1},L\, u^n \rangle_{-\infty,0} \, .
\end{equation*}
By the Young inequality, we get
\begin{equation*}
({\bf T}-I) \, E_0(u^n,\dots,u^{n+s}) +\dfrac{s+1}{2} \, \Ng L\, u^n \Nd_{-\infty,+\infty}^2 
\le \dfrac{2}{s+1} \, \Ng g^{n+s+1} \Nd_{-\infty,0}^2 \, .
\end{equation*}
We multiply the latter inequality by $\exp (-2\, \gamma \, (n+s+1) \, \Delta t)$, sum from $n=0$ to some arbitrary 
$N$ and already derive the estimate (here we use again the fact that $\Delta t/\Delta x_1$ is a fixed positive 
constant):
\begin{multline*}
\sup_{n \ge 1} \, {\rm e}^{-2\, \gamma \, (n+s)\, \Delta t} \, E_0(u^n,\dots,u^{n+s}) 
+\big( 1-{\rm e}^{-2\, \gamma \, \Delta t} \big) \, 
\sum_{n \ge 0} {\rm e}^{-2\, \gamma \, (n+s)\, \Delta t} \, E_0(u^n,\dots,u^{n+s}) \\
+\sum_{n\ge 0} \Delta t \, {\rm e}^{-2\, \gamma \, (n+s+1)\, \Delta t} \, \sum_{j_1 \in \Z} 
\| L\, u_{j_1,\cdot}^n \|_{\ell^2(\Z^{d-1})}^2 \\
\le C \, \left\{ {\rm e}^{-2\, \gamma \, s\, \Delta t} \, E_0(f^0,\dots,f^s) 
+\sum_{n\ge s+1} \Delta t \, {\rm e}^{-2\, \gamma \, n\, \Delta t} \, \sum_{j_1 \le 0} 
\| g_{j_1,\cdot}^n \|_{\ell^2(\Z^{d-1})}^2 \right\} \, .
\end{multline*}
Using the coercivity of $E_0$ and the inequality 
\begin{equation*}
1-{\rm e}^{-2\, \gamma \, \Delta t} \ge \dfrac{\gamma \, \Delta t}{\gamma \, \Delta t +1} \, ,
\end{equation*}
we have therefore derived the estimate
\begin{multline}
\label{estim1}
\sup_{n \ge 0} \, {\rm e}^{-2\, \gamma \, n\, \Delta t} \, \Ng u^n \Nd_{-\infty,+\infty}^2 
+\dfrac{\gamma}{\gamma \, \Delta t +1} \, \sum_{n\ge 0} \Delta t \, {\rm e}^{-2\, \gamma \, n\, \Delta t} \, 
\Ng u^n \Nd_{-\infty,+\infty}^2 \\
+\sum_{n\ge 0} \Delta t \, {\rm e}^{-2\, \gamma \, (n+s+1)\, \Delta t} \, \sum_{j_1 \in \Z} 
\| L\, u_{j_1,\cdot}^n \|_{\ell^2(\Z^{d-1})}^2 \\
\le C \, \left\{ \sum_{\sigma=0}^s \Ng f^\sigma \Nd_{-\infty,+\infty}^2 
+\sum_{n\ge s+1} \Delta t \, {\rm e}^{-2\, \gamma \, n\, \Delta t} \, \sum_{j_1 \le 0} 
\| g_{j_1,\cdot}^n \|_{\ell^2(\Z^{d-1})}^2 \right\} \, ,
\end{multline}
where the constant $C$ is independent of $\gamma$, $\Delta t$ and on the solution $(u_j^n)$. In order to 
prove \eqref{estimabsorb}, the main remaining task is to derive the trace estimate for $(u_j^n)$. This is done 
by first dealing with the case where $\gamma \, \Delta t$ is large.
\bigskip

$\bullet$ From the definition of the operator $L$, see \eqref{defLM}, there exists a constant $C>0$ and an 
integer $J$ such that
\begin{equation*}
(L\, u_j^n)^2 \ge \dfrac{1}{2} \, (Q_{s+1} \, u_j^{n+s+1})^2 -C \, \sum_{\sigma=0}^s \sum_{|\ell| \le J} 
(u_{j+\ell}^{n+\sigma})^2 \, .
\end{equation*}
Since $Q_{s+1}$ is an isomorphism, there exists a constant $c>0$ such that
\begin{equation*}
\sum_{j \in \Z^d} (L\, u_j^n)^2 \ge c \, \sum_{j \in \Z^d} (u_j^{n+s+1})^2 -\dfrac{1}{c} \, \sum_{\sigma=0}^s 
\sum_{j \in \Z^d} (u_j^{n+\sigma})^2 \, .
\end{equation*}
Multiplying by $\exp (-2 \, \gamma \, (n+s+1) \, \Delta t)$ and summing with respect to $n \in \N$, we get
\begin{multline*}
\sum_{n\ge s+1} \Delta t \, {\rm e}^{-2\, \gamma \, n\, \Delta t} \, \sum_{j_1 \in \Z} 
\| u_{j_1,\cdot}^n \|_{\ell^2(\Z^{d-1})}^2 \le C \, \left\{ 
\sum_{n\ge 0} \Delta t \, {\rm e}^{-2\, \gamma \, (n+s+1)\, \Delta t} \, \sum_{j_1 \in \Z} 
\| L\, u_{j_1,\cdot}^n \|_{\ell^2(\Z^{d-1})}^2 \right. \\
\left. +{\rm e}^{-2\, \gamma \, \Delta t} \, \sum_{n\ge 0} \Delta t \, {\rm e}^{-2\, \gamma \, n\, \Delta t} \, 
\sum_{j_1 \in \Z} \| u_{j_1,\cdot}^n \|_{\ell^2(\Z^{d-1})}^2 \right\} \, .
\end{multline*}
Choosing $\gamma \, \Delta t$ large enough, that is $\gamma \, \Delta t \ge \ln R_0$ for some numerical constant 
$R_0>1$ that depends only on the (fixed) coefficients of the operator $L$, we have derived the estimate
\begin{multline*}
\sum_{n\ge s+1} \Delta t \, {\rm e}^{-2\, \gamma \, n\, \Delta t} \, \sum_{j_1 \in \Z} 
\| u_{j_1,\cdot}^n \|_{\ell^2(\Z^{d-1})}^2 \le C \, \left\{ 
\sum_{n\ge 0} \Delta t \, {\rm e}^{-2\, \gamma \, (n+s+1)\, \Delta t} \, \sum_{j_1 \in \Z} 
\| L\, u_{j_1,\cdot}^n \|_{\ell^2(\Z^{d-1})}^2 \right. \\
\left. +{\rm e}^{-2\, \gamma \, \Delta t} \, \sum_{\sigma=0}^s \Ng f^\sigma \Nd_{-\infty,+\infty}^2 \right\} \, .
\end{multline*}
It remains to use \eqref{estim1} and we get an even better estimate than \eqref{estimabsorb} which we were 
originally aiming at:
\begin{equation*}
\sum_{n\ge s+1} \Delta t \, {\rm e}^{-2\, \gamma \, n\, \Delta t} \, \sum_{j_1 \in \Z} 
\| u_{j_1,\cdot}^n \|_{\ell^2(\Z^{d-1})}^2 \le C \, \left\{ \sum_{\sigma=0}^s \Ng f^\sigma \Nd_{-\infty,+\infty}^2 
+\sum_{n\ge s+1} \Delta t \, {\rm e}^{-2\, \gamma \, n\, \Delta t} \, \sum_{j_1 \le 0} 
\| g_{j_1,\cdot}^n \|_{\ell^2(\Z^{d-1})}^2 \right\} \, .
\end{equation*}
This gives a control of infinitely many traces and not only finitely many (this restriction to finitely many traces 
will appear in the regime where $\gamma \, \Delta t$ can be small).
\bigskip

$\bullet$ From now on, we have fixed a constant $R_0>1$ such that \eqref{estimabsorb} holds for $\gamma \, 
\Delta t \ge \ln R_0$ and we thus assume $\gamma \, \Delta t \in (0,\ln R_0]$. We also know that the estimate 
\eqref{estim1} holds, independently of the value of $\gamma \, \Delta t$, and we now wish to estimate the traces 
of the solution $(u_j^n)$ for finitely many values of $j_1$.

We first observe from \eqref{estim1} that for all $\gamma>0$ and $\Delta t \in (0,1]$, there exists a constant 
$C_{\gamma,\Delta t}$ such that
\begin{equation*}
\forall \, n \in \N \, ,\quad {\rm e}^{-2\, \gamma \, n\, \Delta t} \, \sum_{j \in \Z^d} (u_j^n)^2 \le C_{\gamma,\Delta t} \, .
\end{equation*}
In particular, for any $j_1 \in \Z$, the Laplace-Fourier transforms $\widehat{u_{j_1}}$ of the step functions
\begin{equation*}
u_{j_1} \quad : \quad (t,y) \in \R^+ \times \R^{d-1} \mapsto u_j^n \quad \text{\rm if } (t,y) \in [n\, \Delta t,(n+1) \, \Delta t) 
\times \prod_{k=2}^d  [j_k \, \Delta x_k,(j_k+1) \, \Delta x_k) \, ,
\end{equation*}
is well-defined on $\{ \tau \in \C \, , \, \text{\rm Re } \tau >0 \} \times \R^{d-1}$. The dual variables are denoted 
$\tau =\gamma +i\, \theta$, $\gamma>0$, and $\eta =(\eta_2,\dots,\eta_d) \in \R^{d-1}$. It will also be convenient 
to introduce the notation $\eta_\Delta := (\eta_2 \, \Delta x_2,\dots,\eta_d \, \Delta x_d)$. Given $\Gamma>0$, the 
sequence $(\widehat{u_{j_1}}(\Gamma+i\, \theta,\eta))_{j_1 \in \Z}$ belongs to $\ell^2(\Z)$ for almost every 
$(\theta,\eta) \in \R \times \R^{d-1}$.

We first show the following estimate.

\begin{lemma}
\label{lem3'}
With $R_0>1$ fixed as above, there exists a constant $C>0$ such that for all $\gamma>0$ and $\Delta t \in (0,1]$ 
satisfying $\gamma \, \Delta t \in (0,\ln R_0]$, there holds
\begin{multline}
\label{estimlem3'}
\sum_{j_1 \in \Z} \int_{\R \times \R^{d-1}} \left| \sum_{\ell_1=-r_1}^{p_1} 
a_{\ell_1} \big( {\rm e}^{(\gamma +i\, \theta) \, \Delta t},\eta_\Delta \big) \, 
\widehat{u_{j_1+\ell_1}}(\gamma +i\, \theta,\eta) \right|^2 \, {\rm d}\theta \, {\rm d}\eta \\
+\sum_{j_1 \le 0} \int_{\R \times \R^{d-1}} \left| \sum_{\ell_1=-r_1}^{p_1} {\rm e}^{(\gamma +i\, \theta) \, \Delta t} 
\, \partial_z a_{\ell_1} \big( {\rm e}^{(\gamma +i\, \theta) \, \Delta t},\eta_\Delta \big) \, 
\widehat{u_{j_1+\ell_1}}(\gamma +i\, \theta,\eta) \right|^2 \, {\rm d}\theta \, {\rm d}\eta \\
\le C \, \left\{ \sum_{\sigma=0}^s \Ng f^\sigma \Nd_{-\infty,+\infty}^2 
+\sum_{n\ge s+1} \Delta t \, {\rm e}^{-2\, \gamma \, n\, \Delta t} \, \sum_{j_1 \le 0} 
\| g_{j_1,\cdot}^n \|_{\ell^2(\Z^{d-1})}^2 \right\} \, .
\end{multline}
\end{lemma}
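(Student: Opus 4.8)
The plan is to recognise the two frequency-side quantities in \eqref{estimlem3'} as the Laplace-Fourier transforms of $L\, u$ and $M\, u$ respectively, and then to invoke the already established estimate \eqref{estim1} together with the auxiliary boundary condition in \eqref{numabsorbing}. First I would fix $(\theta,\eta)$ and take the (discrete) Laplace transform in time, with variable $z={\rm e}^{(\gamma+i\, \theta)\, \Delta t}$, together with the Fourier transform in the transverse variables, with dual variable $\eta_\Delta$. By the very definition \eqref{defA-d} of the coefficients $a_{\ell_1}$ and of the operators $L,M$ in \eqref{defLM}, the symbol of $L$ in the $j_1$-direction is $\sum_{\ell_1} a_{\ell_1}(z,\eta_\Delta)\, {\bf S}^{\ell_1}$, while the symbol of $M$ is obtained by applying $z\, \partial_z$ to each coefficient, namely $\sum_{\ell_1} z\, \partial_z a_{\ell_1}(z,\eta_\Delta)\, {\bf S}^{\ell_1}$, since $z\, \partial_z (z^\sigma)=\sigma\, z^\sigma$. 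The only subtlety is that the Laplace transform of a time shift ${\bf T}^\sigma$ applied to a sequence defined for $n\ge 0$ produces correction terms involving the initial values $u^0=f^0,\dots,u^s=f^s$; thus the first (resp. second) integrand in \eqref{estimlem3'} equals the transform of $L\, u_{j_1}$ (resp. $M\, u_{j_1}$) up to a finite linear combination of the transforms of the $f^\sigma$, whose square norm is controlled by $\sum_\sigma \Ng f^\sigma \Nd_{-\infty,+\infty}^2$.

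Next I would use Parseval's identity to convert the frequency-side integrals back into weighted sums over $n$. For each fixed $j_1$, integrating $|\widehat{L\, u_{j_1}}|^2$ over $(\theta,\eta)\in \R \times \R^{d-1}$ reproduces, up to a uniform multiplicative constant, the quantity $\sum_{n\ge 0} \Delta t\, {\rm e}^{-2\, \gamma\, n\, \Delta t}\, \| L\, u_{j_1,\cdot}^n \|_{\ell^2(\Z^{d-1})}^2$, and likewise for $M\, u_{j_1}$. Summing the first of these over all $j_1 \in \Z$ gives exactly the interior dissipation term already appearing on the left-hand side of \eqref{estim1}, hence it is bounded by the right-hand side of \eqref{estimlem3'}. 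For the second term, restricted to $j_1 \le 0$, I would invoke the auxiliary boundary condition $M\, u_j^n = g_j^{n+s+1}$ from \eqref{numabsorbing}, so that the transform of $M\, u_{j_1}$ is (a bounded multiple of) the transform of $g_{j_1}$; by Parseval this contributes $\sum_{n\ge s+1} \Delta t\, {\rm e}^{-2\, \gamma\, n\, \Delta t} \sum_{j_1 \le 0} \| g_{j_1,\cdot}^n \|_{\ell^2(\Z^{d-1})}^2$, which is the second term on the right-hand side of \eqref{estimlem3'}.

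The delicate point is to carry out the Parseval bookkeeping with constants that are uniform in both $\gamma>0$ and $\Delta t \in (0,1]$. Because we only take the continuous Laplace transform of step functions, the identity relating $\int_\R |\widehat{\,\cdot\,}|^2\, {\rm d}\theta$ to $\sum_n \Delta t\, {\rm e}^{-2\, \gamma\, n\, \Delta t}\, |\cdot|^2$ carries step-function factors of the type $|1-{\rm e}^{-(\gamma+i\, \theta)\, \Delta t}|^2 / |\gamma+i\, \theta|^2$, and the shift by $s+1$ in the boundary data produces factors $z^{s+1}$ of modulus ${\rm e}^{\gamma\, (s+1)\, \Delta t}$. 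This is precisely where the standing assumption $\gamma\, \Delta t \in (0,\ln R_0]$ is used: it keeps all these factors bounded above and below by constants depending only on $R_0$, $s$ and the fixed coefficients of $L$, hence uniform in $(\gamma,\Delta t)$. I expect this uniform transform bookkeeping, rather than the algebraic identification of the symbols, to be the main technical obstacle; once it is in place, the conclusion is a direct consequence of \eqref{estim1} and of the boundary condition in \eqref{numabsorbing}.
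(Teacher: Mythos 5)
Your proposal is correct and follows essentially the same route as the paper: identify the two integrands as the Laplace--Fourier transforms of $L\,u_{j_1,\cdot}$ and $M\,u_{j_1,\cdot}$ up to initial-data correction terms, convert back via Plancherel, bound the $L$ part by the dissipation term in \eqref{estim1} and the $M$ part (for $j_1\le 0$) by the boundary condition of \eqref{numabsorbing}, with all transform factors kept uniform thanks to $\gamma\,\Delta t\in(0,\ln R_0]$. This matches the paper's identities \eqref{lem3'1}--\eqref{lem3'2} and the subsequent bookkeeping, so no gap to report.
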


\begin{proof}[Proof of Lemma \ref{lem3'}]
Given $\tau =\gamma +i \, \theta$ and $\eta$, we compute (here $j_1 \in \Z$ is fixed):
\begin{align}
\sum_{\ell_1=-r_1}^{p_1} a_{\ell_1} \big( {\rm e}^{\tau \, \Delta t},\eta_\Delta \big) 
\, \widehat{u_{j_1+\ell_1}}(\tau,\eta)  &= \widehat{L\, u_{j_1,\cdot}} (\tau,\eta) 
+\dfrac{1-{\rm e}^{-\tau \, \Delta t}}{\tau} \, 
\sum_{\sigma=1}^{s+1} \sum_{\sigma'=0}^{\sigma-1} {\rm e}^{(\sigma-\sigma') \, \tau \, \Delta t} 
\, {\mathcal F}_{j_1}^{\sigma,\sigma'}(\eta) \, ,\label{lem3'1} \\
\sum_{\ell_1=-r_1}^{p_1} {\rm e}^{\tau \, \Delta t} \, \partial_z a_{\ell_1} \big( {\rm e}^{\tau \, \Delta t},\eta_\Delta \big) 
\, \widehat{u_{j_1+\ell_1}}(\tau,\eta)  &= \widehat{M\, u_{j_1,\cdot}} (\tau,\eta) 
+\dfrac{1-{\rm e}^{-\tau \, \Delta t}}{\tau} \, \sum_{\sigma=1}^{s+1} \sum_{\sigma'=0}^{\sigma-1} \sigma \, 
{\rm e}^{(\sigma-\sigma') \, \tau \, \Delta t} \, {\mathcal F}_{j_1}^{\sigma,\sigma'}(\eta) \, .\label{lem3'2}
\end{align}
where, in \eqref{lem3'1} and \eqref{lem3'2}, we have set
\begin{equation*}
{\mathcal F}_{j_1}^{\sigma,\sigma'}(\eta) =\sum_{\ell_1=-r_1}^{p_1} \left( \sum_{\ell'=-r'}^{p'} a_{\ell,\sigma} \, 
{\rm e}^{i\, \ell' \cdot \eta_\Delta} \, \right) \,  \widehat{f^{\sigma'}_{j_1+\ell_1,\cdot}} (\eta) \, ,
\end{equation*}
which corresponds to the partial Fourier transform with respect to $y=(x_2,\dots,x_d) \in \R^{d-1}$, of the step 
function associated with the sequence $(Q_\sigma \, f_j^{\sigma'})$ (no Laplace transform here).

We need to estimate integrals with respect to $(\theta,\eta)$ of the right hand side of \eqref{lem3'1} and 
\eqref{lem3'2}. The first term on the right of \eqref{lem3'1} and \eqref{lem3'2} are easy. For instance, we 
have (applying Plancherel Theorem):
\begin{align*}
\sum_{j_1 \in \Z} \int_{\R \times \R^{d-1}} \big| \widehat{L\, u_{j_1,\cdot}} (\tau,\eta) \big|^2 \, 
{\rm d}\theta \, {\rm d}\eta &=(2\, \pi)^d \, \sum_{j_1 \in \Z} \, \sum_{n \ge 0} \int_{n\, \Delta t}^{(n+1) \, \Delta t} 
{\rm e}^{-2\, \gamma \, s} \, \| L \, u_{j_1,\cdot}^n \|_{\ell^2(\Z^{d-1})}^2 \, {\rm d}s \\
&= (2\, \pi)^d \, \dfrac{1 -{\rm e}^{-2\, \gamma \, \Delta t}}{2\, \gamma \, \Delta t} \, \sum_{n \ge 0} 
\Delta t \, {\rm e}^{-2\, \gamma \, n \, \Delta t} \, \sum_{j_1 \in \Z} \| L \, u_{j_1,\cdot}^n \|_{\ell^2(\Z^{d-1})}^2 \, .
\end{align*}
We now recall that $\gamma \, \Delta t$ is restricted to the interval $(0,\ln R_0]$, and we use \eqref{estim1} 
to derive
\begin{equation*}
\sum_{j_1 \in \Z} \int_{\R \times \R^{d-1}} \big| \widehat{L\, u_{j_1,\cdot}} (\tau,\eta) \big|^2 \, 
{\rm d}\theta \, {\rm d}\eta \le C \, \left\{ \sum_{\sigma=0}^s \Ng f^\sigma \Nd_{-\infty,+\infty}^2 
+\sum_{n\ge s+1} \Delta t \, {\rm e}^{-2\, \gamma \, n\, \Delta t} \, \sum_{j_1 \le 0} 
\| g_{j_1,\cdot}^n \|_{\ell^2(\Z^{d-1})}^2 \right\} \, .
\end{equation*}
Similarly, we have
\begin{equation*}
\sum_{j_1 \le 0} \int_{\R \times \R^{d-1}} \big| \widehat{M\, u_{j_1,\cdot}} (\tau,\eta) \big|^2 \, 
{\rm d}\theta \, {\rm d}\eta =(2\, \pi)^d \, \dfrac{1 -{\rm e}^{-2\, \gamma \, \Delta t}}{2\, \gamma \, \Delta t} 
\, \sum_{n \ge 0} \Delta t \, {\rm e}^{-2\, \gamma \, n \, \Delta t} \, 
\sum_{j_1 \le 0} \| M \, u_{j_1,\cdot}^n \|_{\ell^2(\Z^{d-1})}^2 \, ,
\end{equation*}
which we can again uniformly estimate by the right hand side of \eqref{estimlem3'}.

Going back to the right hand side terms in \eqref{lem3'1} and \eqref{lem3'2}, we find that there only remains 
for proving \eqref{estimlem3'} to estimate the integral (here there are finitely many values of $\sigma$ and 
$\sigma'$):
\begin{equation*}
\sum_{j_1 \in \Z} \int_{\R \times \R^{d-1}} \left| \dfrac{1-{\rm e}^{-\tau \, \Delta t}}{\tau} \right|^2 \, 
\big| {\mathcal F}_{j_1}^{\sigma,\sigma'}(\eta) \big|^2 \, {\rm d}\theta \, {\rm d}\eta 
=\left( \int_\R \left| \dfrac{1-{\rm e}^{-\tau \, \Delta t}}{\tau} \right|^2 \, {\rm d}\theta \right) \, 
\sum_{j_1 \in \Z} \int_{\R^{d-1}} \big| {\mathcal F}_{j_1}^{\sigma,\sigma'}(\eta) \big|^2 \, {\rm d}\eta \, ,
\end{equation*}
where we have applied Fubini Theorem. Applying first Plancherel Theorem with respect to the $d-1$ last space 
variables, we get
\begin{equation*}
\sum_{j_1 \in \Z} \int_{\R^{d-1}} \big| {\mathcal F}_{j_1}^{\sigma,\sigma'}(\eta) \big|^2 \, {\rm d}\eta \le C \, 
\sum_{j_1 \in \Z} \sum_{j' \in \Z^{d-1}} \left( \prod_{k=2}^d \Delta x_k \right) \, (f_j^{\sigma'})^2 \le 
\dfrac{C}{\Delta t} \, \sum_{\sigma=0}^s \Ng f^\sigma \Nd_{-\infty,+\infty}^2 \, .
\end{equation*}
The conclusion then follows by computing
\begin{equation*}
\int_\R \left| \dfrac{1 -{\rm e}^{-\tau \, \Delta t}}{\tau} \right|^2 \, {\rm d}\theta =
2\, \pi \, \Delta t \, \dfrac{1 -{\rm e}^{-2 \, \gamma \, \Delta t}}{2 \, \gamma \, \Delta t} \, ,
\end{equation*}
and by recalling that $\gamma \, \Delta t$ belongs to $(0,\ln R_0]$. We can eventually bound the integrals on 
the left hand side of \eqref{estimlem3'} by estimating separately the integrals of each term on the right hand side 
of \eqref{lem3'1} and \eqref{lem3'2}.
\end{proof}

\noindent The conclusion now relies on the following crucial result.

\begin{lemma}[The trace estimate]
\label{lem3}
Let Assumptions \ref{assumption0}, \ref{assumption1} and \ref{assumption2} be satisfied. Let $R_0>1$ be fixed 
as above and let $P_1 \in \N$. Then there exists a constant $C_{P_1}>0$ such that for all $z \in \U$ with $|z| \le 
R_0$, for all $\eta \in \R^{d-1}$ and for all sequence $(w_{j_1})_{j_1 \in \Z} \in \ell^2(\Z;\C)$, there holds
\begin{equation}
\label{estimlem3}
\sum_{j_1=-r_1-p_1}^{P_1} |w_{j_1}|^2 \le C_{P_1} \, \left\{ \sum_{j_1 \in \Z} \left| \sum_{\ell_1=-r_1}^{p_1} 
a_{\ell_1}(z,\eta) \, w_{j_1+\ell_1} \right|^2 
+\sum_{j_1 \le 0} \left| \sum_{\ell_1=-r_1}^{p_1} z \, \partial_z a_{\ell_1}(z,\eta) \, w_{j_1+\ell_1} \right|^2 
\right\} \, .
\end{equation}
Recall that the functions $a_{\ell_1}$, $\ell_1=-r_1,\dots,p_1$, are defined in \eqref{defA-d}.
\end{lemma}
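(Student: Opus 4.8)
The plan is to read both brackets on the right-hand side of \eqref{estimlem3} as discrete convolutions in the normal variable $j_1$. For fixed $z\in\U$ and $\eta\in\R^{d-1}$, set $F_{j_1}:=\sum_{\ell_1=-r_1}^{p_1}a_{\ell_1}(z,\eta)\,w_{j_1+\ell_1}$ and $G_{j_1}:=\sum_{\ell_1=-r_1}^{p_1}z\,\partial_z a_{\ell_1}(z,\eta)\,w_{j_1+\ell_1}$, so that the first sum in \eqref{estimlem3} is $\sum_{j_1}|F_{j_1}|^2$ and the second is $\sum_{j_1\le0}|G_{j_1}|^2$. The attached Fourier symbol $\mathcal A_z(\xi):=\sum_{\ell_1}a_{\ell_1}(z,\eta)\,{\rm e}^{i\ell_1\xi}$ is exactly the left-hand side of the dispersion relation \eqref{dispersion} evaluated at $(\xi,\eta)$; hence by Assumption \ref{assumption1} it is a degree-$(s+1)$ polynomial in $z$ with simple roots in $\Dbar$, and in particular $\mathcal A_z(\xi)\ne0$ for every $z\in\U$. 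The characteristic polynomial $A(\kappa):=\sum_{\ell_1}a_{\ell_1}(z,\eta)\,\kappa^{\ell_1+r_1}$ thus has no root on $\cercle$ when $z\in\U$; by Assumption \ref{assumption2} its endpoints $a_{p_1}$ and $a_{-r_1}$ are bounded away from $0$ on the compact set $\{z\in\Ubar,\ |z|\le R_0\}\times\R^{d-1}$, so $A$ has exact degree $r_1+p_1$, a nonzero constant term, and all its roots confined to a fixed annulus.

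The first step is a reduction to a single window of $r_1+p_1$ consecutive traces near the boundary. Since $a_{p_1}$ and $a_{-r_1}$ are uniformly invertible, the scalar relation $F_{j_1}=\sum_{\ell_1}a_{\ell_1}w_{j_1+\ell_1}$ lets me solve for $w_{j_1+p_1}$ from $F$ and the $r_1+p_1$ preceding values, and symmetrically toward decreasing indices. Over the fixed range $-r_1-p_1\le j_1\le P_1$ this involves only a bounded number of such substitutions, each with amplification controlled uniformly because the roots $\kappa$ stay in a fixed annulus. It therefore suffices to bound one window of $r_1+p_1$ consecutive traces with indices $\le0$, say $(w_{-r_1-p_1},\dots,w_{-1})$, by the right-hand side of \eqref{estimlem3}; the values $w_0,\dots,w_{P_1}$ are then recovered by forward substitution, at the cost of a constant depending on $P_1$.

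The core point is a uniform lower bound on the combined symbol. Writing $\mathcal B_z(\xi):=z\,\partial_z\mathcal A_z(\xi)=\sum_{\ell_1}z\,\partial_z a_{\ell_1}(z,\eta)\,{\rm e}^{i\ell_1\xi}$, I claim $|\mathcal A_z(\xi)|^2+|\mathcal B_z(\xi)|^2\ge c>0$ uniformly for $\xi\in\R$, $\eta\in\R^{d-1}$ and $z\in\Ubar$ with $|z|\le R_0$. Indeed, for $z\in\U$ one already has $\mathcal A_z(\xi)\ne0$; and if $z\in\cercle$ with $\mathcal A_z(\xi)=0$, then $z$ is a root of the dispersion polynomial attached to $(\xi,\eta)$, which is \emph{simple} by Assumption \ref{assumption1}, whence $\partial_z\mathcal A_z(\xi)\ne0$ and therefore $\mathcal B_z(\xi)\ne0$. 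The sum is thus strictly positive on the whole parameter set, which is compact once one uses $2\pi$-periodicity in $\xi$ and in each component of $\eta$, and continuity yields the uniform $c$. This is exactly the Leray--G{\aa}rding mechanism: the multiplier symbol $z\,\partial_z\mathcal A_z$, i.e. the symbol of $M$, sees precisely the glancing frequencies at which $\mathcal A_z$, the symbol of $L$, degenerates as $z\to\cercle$. The requirement in Assumption \ref{assumption2} that $a_{p_1},a_{-r_1}$ be non-constant in $z$ ensures that $\mathcal B_z$ keeps nonzero endpoint coefficients, so that the $M$-residual propagates like the $L$-residual.

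The remaining, and main, difficulty is to turn this full-line symbol positivity into the window estimate while the $M$-residual is controlled only on $j_1\le0$. I would organize this through the first-order companion form $W_{j_1+1}=\mathbb M(z,\eta)\,W_{j_1}+\mathbb F_{j_1}$, with state $W_{j_1}=(w_{j_1},\dots,w_{j_1+r_1+p_1-1})$ and $\mathbb F$ built from $F$ with $\sum_{j_1}|\mathbb F_{j_1}|^2\le C\sum_{j_1}|F_{j_1}|^2$. For $z\in\U$ the matrix $\mathbb M$ is hyperbolic, with a splitting $\C^{r_1+p_1}=E^s(z)\oplus E^u(z)$; the stable part of the window is reconstructed from $F$ on $j_1\ge0$ and the unstable part from $F$ on $j_1<0$, but the resulting bound degenerates as eigenvalues of $\mathbb M$ approach $\cercle$, which is exactly the regime $z\to\cercle$. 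The plan is to split off the eigenvalues within a fixed distance $\delta$ of $\cercle$: away from them the dichotomy Green's function is uniformly bounded and gives control by $\sum_{j_1}|F_{j_1}|^2$ alone, whereas the near-$\cercle$ block is precisely where $\mathcal A_z$ nearly vanishes and $\mathcal B_z$ stays bounded below, so its contribution must be captured by the one-sided residual $\sum_{j_1\le0}|G_{j_1}|^2$. I expect the careful bookkeeping of this near-$\cercle$ block — showing that the one-sided $M$-residual pins it down uniformly as the weakly stable and weakly unstable eigenvalues cross $\cercle$ — to be the technical crux of the lemma, and the place where the simplicity of the roots (Assumption \ref{assumption1}) and the nondegenerate endpoints (Assumption \ref{assumption2}) are used in full, reconciling the asymmetry between the full-line control of $F$ and the half-line control of $G$.
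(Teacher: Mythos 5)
Your setup is sound: the reduction to a fixed window via the non-vanishing of $a_{p_1},a_{-r_1}$, the companion-matrix reformulation, and the identification of the real difficulty (full-line control of the $L$-residual versus half-line control of the $M$-residual near glancing modes) all match the actual structure of the problem. But the proof stops exactly where the lemma begins. Your last paragraph is a plan, not an argument: you say you ``expect the careful bookkeeping of the near-$\cercle$ block \dots to be the technical crux,'' and that crux is never carried out. As written, the proposal establishes the easy regime (eigenvalues of the $L$-companion matrix uniformly off $\cercle$) and defers the hard one.

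The missing ingredient is a specific structural fact that your symbol positivity does not deliver. You prove that $|\mathcal A_z(\xi)|^2+|\mathcal B_z(\xi)|^2\ge c$, i.e.\ that the companion matrices of $L$ and of $M$ have no \emph{common} eigenvalue on $\cercle$. What the argument actually requires is much stronger: the companion matrix $\M(z,\eta)$ of the multiplier symbol $\sum_{\ell_1} z\,\partial_z a_{\ell_1}(z,\eta)\,\kappa_1^{\ell_1}$ has \emph{no eigenvalue at all} on $\cercle$, for every $z\in\Ubar$ including $z\in\cercle$. This follows from Assumption \ref{assumption1} via the Gauss--Lucas theorem: a unit-modulus root $\kappa_1$ of that symbol would make $z$ a root of $\partial_z P_\kappa$ with $\kappa\in(\cercle)^d$, while the simplicity of the roots of $P_\kappa$ in $\Dbar$ forces all roots of $\partial_z P_\kappa$ into the open disk $\D$. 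This uniform hyperbolic dichotomy of $\M$ up to $z\in\cercle$ is precisely what lets the half-line sum $\sum_{j_1\le 0}|G_{j_1}|^2$ pin down the $\M$-stable part of the boundary window with constants that do not blow up as $z\to\cercle$; your weaker joint non-vanishing leaves open the possibility of center eigenvalues of $\M$, for which the half-line Green's function for the $M$-recursion has no uniform decay and the ``near-$\cercle$ block'' cannot be closed. The paper then finishes by a compactness/contradiction argument: the limiting window lies in the generalized eigenspace of the \emph{common} eigenvalues of $\LL$ and $\M$, the $G$-residual on $j_1\le 0$ kills its $\M$-stable part, the $F$-residual on $j_1\ge 0$ kills its $\LL$-unstable part, and since $\M$ has no circle eigenvalues nothing survives. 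You should either prove the Gauss--Lucas fact and run this dichotomy argument in full, or find a substitute; without it the proposal has a genuine gap at the central step.
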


The proof of Lemma \ref{lem3} is rather long. Before giving it in full details, we indicate how Lemma \ref{lem3} 
yields the result of Theorem \ref{absorbing}. We apply Lemma \ref{lem3} to $z=\exp (\tau \, \Delta t)$, 
$\tau=\gamma +i\, \theta$ with $\gamma \, \Delta t \in (0, \ln R_0]$, $\eta_\Delta \in \R^{d-1}$ and 
the sequence $(\widehat{u_{j_1}} (\tau,\eta))_{j_1 \in \Z}$. We then integrate \eqref{estimlem3} with respect to 
$(\theta,\eta)$ and use Lemma \ref{lem3'} to derive
\begin{equation*}
\sum_{j_1=-r_1-p_1}^{P_1} \int_{\R \times \R^{d-1}} \left| \widehat{u_{j_1}} (\gamma+i\, \theta,\eta) \right|^2 \, 
{\rm d}\theta \, {\rm d}\eta \le C \, \left\{ \sum_{\sigma=0}^s \Ng f^\sigma \Nd_{-\infty,+\infty}^2 
+\sum_{n\ge s+1} \Delta t \, {\rm e}^{-2\, \gamma \, n\, \Delta t} \, \sum_{j_1 \le 0} 
\| g_{j_1,\cdot}^n \|_{\ell^2(\Z^{d-1})}^2 \right\} .
\end{equation*}
It remains to apply Plancherel Theorem and we get
\begin{multline*}
\sum_{j_1=-r_1-p_1}^{P_1} \sum_{n \in \N} \dfrac{1-{\rm e}^{-2\, \gamma \, \Delta t}}{2\, \gamma \, \Delta t} \, 
\Delta t \, {\rm e}^{-2\, \gamma \, n\, \Delta t} \, \| u_{j_1,\cdot}^n \|_{\ell^2(\Z^{d-1})}^2 \\
\le C \, \left\{ \sum_{\sigma=0}^s \Ng f^\sigma \Nd_{-\infty,+\infty}^2 
+\sum_{n\ge s+1} \Delta t \, {\rm e}^{-2\, \gamma \, n\, \Delta t} \, \sum_{j_1 \le 0} 
\| g_{j_1,\cdot}^n \|_{\ell^2(\Z^{d-1})}^2 \right\} \, .
\end{multline*}
Recalling that $\gamma \, \Delta t$ is restricted to the interval $(0, \ln R_0]$, we have thus derived the trace 
estimate
\begin{equation*}
\sum_{n \in \N} \Delta t \, {\rm e}^{-2\, \gamma \, n\, \Delta t} \, \sum_{j_1=-r_1-p_1}^{P_1} 
\| u_{j_1,\cdot}^n \|_{\ell^2(\Z^{d-1})}^2 
\le C \, \left\{ \sum_{\sigma=0}^s \Ng f^\sigma \Nd_{-\infty,+\infty}^2 
+\sum_{n\ge s+1} \Delta t \, {\rm e}^{-2\, \gamma \, n\, \Delta t} \, \sum_{j_1 \le 0} 
\| g_{j_1,\cdot}^n \|_{\ell^2(\Z^{d-1})}^2 \right\} \, .
\end{equation*}
Combined with the semigroup and interior estimate \eqref{estim1}, this gives the estimate \eqref{estimabsorb} 
of Theorem \ref{absorbing} for $\gamma \, \Delta t \in (0,\ln R_0]$.
\end{proof}

\begin{proof}[Proof of Lemma \ref{lem3}]
Let us recall that the functions $a_{\ell_1}$ are $2\, \pi$-periodic with respect to each coordinate of $\eta$. 
We can therefore restrict to $\eta \in [0,2\, \pi]^{d-1}$ rather than considering $\eta \in \R^{d-1}$. We argue 
by contradiction and assume that the conclusion to Lemma \ref{lem3} does not hold. This means the following, 
up to normalizing and extracting subsequences; there exist three sequences (indexed by $k \in \N$):
\begin{itemize}
 \item a sequence $(w^k)_{k \in \N}$ with values in $\ell^2(\Z;\C)$ such that $(w_{-r_1-p_1}^k,\dots,w_{P_1}^k)$ 
 belongs to the unit sphere of $\C^{P_1+r_1+p_1+1}$ for all $k$, and $(w_{-r_1-p_1}^k,\dots,w_{P_1}^k)$ 
 converges towards $(\underline{w}_{-r_1-p_1},\dots,\underline{w}_{P_1})$ as $k$ tends to infinity,
 
 \item a sequence $(z^k)_{k \in \N}$ with values in $\U \cap \{ \zeta \in \C \, , \, |\zeta| \le R_0 \}$, which converges 
 towards $\underline{z} \in \Ubar$,
 
 \item a sequence $(\eta^k)_{k \in \N}$ with values in $[0,2\, \pi]^{d-1}$, which converges towards $\underline{\eta} 
 \in [0,2\, \pi]^{d-1}$,
\end{itemize}
and these sequences satisfy:
\begin{equation}
\label{lem3-1}
\lim_{k \rightarrow +\infty} \quad \sum_{j_1 \in \Z} \left| \sum_{\ell_1=-r_1}^{p_1} 
a_{\ell_1}(z^k,\eta^k) \, w^k_{j_1+\ell_1} \right|^2 
+\sum_{j_1 \le 0} \left| \sum_{\ell_1=-r_1}^{p_1} z^k \, \partial_z a_{\ell_1}(z^k,\eta^k) \, w^k_{j_1+\ell_1} 
\right|^2 =0 \, .
\end{equation}
We are going to show that \eqref{lem3-1} implies that $(\underline{w}_{-r_1-p_1},\dots,\underline{w}_{P_1})$ 
must be zero, which will yield a contradiction since this vector must have norm $1$.

$\bullet$ Let us first show that each component $(w^k_{j_1})_{k \in \N}$, $j_1 \in \Z$, has a limit as $k$ tends 
to infinity. This is already clear for $j_1=-r_1-p_1,\dots,P_1$. For $j_1>P_1$, we argue by induction. From 
\eqref{lem3-1}, we have
\begin{equation*}
\lim_{k \rightarrow +\infty} \quad \sum_{\ell_1=-r_1}^{p_1} a_{\ell_1}(z^k,\eta^k) \, w^k_{P_1-p_1+1+\ell_1} =0 \, ,
\end{equation*}
and by Assumption \ref{assumption2}, we know that $a_{p_1}(\underline{z},\underline{\eta})$ is nonzero. 
Hence $(w^k_{P_1+1})_{k \in \N}$ converges towards
\begin{equation*}
-\dfrac{1}{a_{p_1}(\underline{z},\underline{\eta})} \, \sum_{\ell_1=-r_1}^{p_1-1} a_{\ell_1}(\underline{z},\underline{\eta}) 
\, \underline{w}_{P_1-p_1+1+\ell_1} \, ,
\end{equation*}
which we define as $\underline{w}_{P_1+1}$. We can argue by induction in the same way for all indices 
$j_1>P_1+1$, but also for indices $j_1<-r_1-p_1$ because the function $a_{-r_1}$ also does not vanish 
on $\Ubar \times \R^{d-1}$.

Using \eqref{lem3-1}, we have thus shown that for each $j_1 \in \Z$, $(w^k_{j_1})_{k \in \N}$ tends towards 
some limit $\underline{w}_{j_1}$ as $k$ tends to infinity, and the sequence $\underline{w}$, which does not 
necessarily belong to $\ell^2(\Z;\C)$, satisfies the induction relations:
\begin{align}
\forall \, j_1 \in \Z \, ,\quad & \sum_{\ell_1=-r_1}^{p_1} a_{\ell_1}(\underline{z},\underline{\eta}) \, 
\underline{w}_{j_1+\ell_1} =0 \, ,\label{induction1}\\
\forall \, j_1 \le 0 \, ,\quad & \sum_{\ell_1=-r_1}^{p_1} \underline{z} \, \partial_z a_{\ell_1}(\underline{z},\underline{\eta}) 
\, \underline{w}_{j_1+\ell_1} =0 \, .\label{induction2}
\end{align}

$\bullet$ The induction relation \eqref{induction1} is the one that arises in \cite{gks,michelson} and all the works that 
deal with strong stability. The main novelty here is to use simultaneously \eqref{induction1} for controlling the unstable 
components of $(\underline{w}_{-r_1-p_1},\dots,\underline{w}_{-1})$ and \eqref{induction2} for controlling the stable 
components of $(\underline{w}_{-r_1-p_1},\dots,\underline{w}_{-1})$. The fact that $\underline{w}$ satisfies 
simultaneously \eqref{induction1} and \eqref{induction2} for $j_1 \le 0$ automatically annihilates the central components. 
This sketch of proof is made precise below.

We define the source terms:
\begin{equation*}
F_{j_1}^k := \sum_{\ell_1=-r_1}^{p_1} a_{\ell_1}(z^k,\eta^k) \, w^k_{j_1+\ell_1} \, ,\quad 
G_{j_1}^k := \sum_{\ell_1=-r_1}^{p_1} z^k \, \partial_z a_{\ell_1}(z^k,\eta^k) \, w^k_{j_1+\ell_1} \, ,
\end{equation*}
which, according to \eqref{lem3-1}, satisfy
\begin{equation}
\label{lem3-2}
\lim_{k \rightarrow 0} \quad \sum_{j_1 \in \Z} |F_{j_1}^k|^2 =0 \, ,\quad 
\lim_{k \rightarrow 0} \quad \sum_{j_1 \le 0} |G_{j_1}^k|^2 =0 \, .
\end{equation}
We also introduce the vectors (here $T$ denotes transposition)
\begin{equation*}
W_{j_1}^k := \Big( w^k_{j_1+p_1},\dots,w^k_{j_1+1-r_1} \Big)^T \, ,\quad 
\underline{W}_{j_1} := \Big( \underline{w}_{j_1+p_1},\dots,\underline{w}_{j_1+1-r_1} \Big)^T \, ,
\end{equation*}
and the matrices:
\begin{align}
\LL (z,\eta) &:= \begin{pmatrix}
-a_{p_1-1} (z,\eta)/a_{p_1} (z,\eta) & \dots & \dots & -a_{-r_1} (z,\eta)/a_{p_1} (z,\eta) \\
1 & 0 & \dots & 0 \\
0 & \ddots & \ddots & \vdots \\
0 & 0 & 1 & 0 \end{pmatrix} \in {\mathcal M}_{p_1+r_1}(\C) \, ,\label{defL} \\
\M (z,\eta) &:= \begin{pmatrix}
-\partial_z a_{p_1-1} (z,\eta)/\partial_z a_{p_1} (z,\eta) & \dots & \dots & 
-\partial_z a_{-r_1} (z,\eta)/\partial_z a_{p_1} (z,\eta) \\
1 & 0 & \dots & 0 \\
0 & \ddots & \ddots & \vdots \\
0 & 0 & 1 & 0 \end{pmatrix} \in {\mathcal M}_{p_1+r_1}(\C) \, .\label{defM}
\end{align}
The matrix $\LL$ is well-defined on $\Ubar \times \R^{d-1}$ according to Assumption \ref{assumption2}. The 
matrix $\M$ is also well-defined on $\Ubar \times \R^{d-1}$ because for any $\eta \in \R^{d-1}$, Assumption 
\ref{assumption2} asserts that $a_{p_1}(\cdot,\eta)$ is a nonconstant polynomial whose roots lie in $\D$. From 
the Gauss-Lucas Theorem, the roots of $\partial_z a_{p_1}(\cdot,\eta)$ lie in the convex hull of those of 
$a_{p_1}(\cdot,\eta)$. Therefore $\partial_z a_{p_1}(\cdot,\eta)$ does not vanish on $\Ubar$. In the same 
way, $\partial_z a_{-r_1}(\cdot,\eta)$ does not vanish on $\Ubar$.

With our above notation, the vectors $W_{j_1}^k$, $\underline{W}_{j_1}$, satisfy the one step induction relations:
\begin{align}
\forall \, j_1 \in \Z \, ,\quad W_{j_1+1}^k &= \LL(z^k,\eta^k) \, W_{j_1}^k 
+\Big( F^k_{j_1+1}/a_{p_1} (z^k,\eta^k),0,\dots,0 \Big)^T \, ,\quad 
\underline{W}_{j_1+1} =\LL(\underline{z},\underline{\eta}) \, \underline{W}_{j_1} \, ,\label{induction1'} \\
\forall \, j_1 \le -1 \, ,\quad W_{j_1+1}^k &= \M(z^k,\eta^k) \, W_{j_1}^k 
+\Big( G^k_{j_1+1}/(z^k \, \partial_z a_{p_1} (z^k,\eta^k)),0,\dots,0 \Big)^T \, ,\quad 
\underline{W}_{j_1+1} =\M(\underline{z},\underline{\eta}) \, \underline{W}_{j_1} \, .\label{induction2'}
\end{align}

$\bullet$ From Assumption \ref{assumption2} and the above application of the Gauss-Lucas Theorem, we 
already know that both matrices $\LL(z,\eta)$ and $\M(z,\eta)$ are invertible for $(z,\eta) \in \Ubar \times 
\R^{d-1}$. Furthermore, Assumption \ref{assumption1} shows that $\LL(z,\eta)$ has no eigenvalue on 
$\cercle$ for $(z,\eta) \in \U \times \R^{d-1}$. This property dates back at least to \cite{kreiss1}. However, 
central eigenvalues on $\cercle$ may occur for $\LL$ when $z$ belongs to $\cercle$. The crucial point for 
proving Lemma \ref{lem3} is that Assumption \ref{assumption1} precludes central eigenvalues of $\M$ for 
all $z \in \Ubar$. Namely, for all $z \in \Ubar$ and all $\eta \in \R^{d-1}$, $\M(z,\eta)$ has no eigenvalue on 
$\cercle$. This property holds because otherwise, for some $(z,\eta) \in \Ubar \times \R^{d-1}$, there would 
exist a solution $\kappa_1 \in \cercle$ to the dispersion relation
\begin{equation*}
\sum_{\ell_1=-r_1}^{p_1} z \, \partial_z a_{\ell_1}(z,\eta) \, \kappa_1^{\ell_1} =0 \, .
\end{equation*}
For convenience, the coordinates of $\eta$ are denoted $(\eta_2,\dots,\eta_d)$. Using the definition \eqref{defA-d} 
of $a_{\ell_1}$, and defining $\kappa :=(\kappa_1,{\rm e}^{i\, \eta_2},\dots,{\rm e}^{i\, \eta_d})$, we have found a 
root $z \in \Ubar$ to the relation
\begin{equation}
\label{dispersionder}
\sum_{\sigma=1}^{s+1} \sigma \, \widehat{Q_\sigma} (\kappa) \, z^{\sigma-1} =0 \, ,
\end{equation}
but this is not possible because the $s+1$ roots (in $z$) to the dispersion relation \eqref{dispersion} are simple 
and belong to $\Dbar$. The Gauss-Lucas Theorem thus shows that the roots to the relation \eqref{dispersionder} 
belong to $\D$ (and therefore not to $\Ubar$).

At this stage, we know that the eigenvalues of $\M(z,\eta)$, $(z,\eta) \in \Ubar \times \R^{d-1}$, split into two 
groups: those in $\U$, which we call the unstable ones, and those in $\D$, which we call the stable ones. For 
$(z,\eta) \in \Ubar \times \R^{d-1}$, we then introduce the spectral projector $\Pi_\M^s(z,\eta)$, resp. $\Pi_\M^u(z,\eta)$, 
of $\M(z,\eta)$ on the generalized eigenspace associated with eigenvalues in $\D$, resp. $\U$. We can then integrate 
the first induction relation in \eqref{induction2'} and get
\begin{equation*}
\Pi_\M^s(z^k,\eta^k) \, W_0^k =\dfrac{1}{z^k \, \partial_z a_{p_1} (z^k,\eta^k)} \, \sum_{j_1 \le 0} 
\M(z^k,\eta^k)^{|j_1|} \, \Pi_\M^s(z^k,\eta^k) \, \Big( G^k_{j_1},0,\dots,0 \Big)^T \, .
\end{equation*}
The projector $\Pi_\M^s$ depends continuously on $(z,\eta) \in \Ubar \times \R^{d-1}$. Furthermore, since the 
spectrum of $\M$ does not meet $\cercle$ even for $z \in \cercle$, there exists a constant $C>0$ and a $\delta \in 
(0,1)$ that are independent of $k \in \N$ and such that
\begin{equation*}
\forall \, j_1 \le 0 \, ,\quad \| \M(z^k,\eta^k)^{|j_1|} \, \Pi_\M^s(z^k,\eta^k) \| \le C \, \delta^{|j_1|} \, .
\end{equation*}
We thus get a uniform estimate with respect to $k$:
\begin{equation*}
|\Pi_\M^s(z^k,\eta^k) \, W_0^k|^2 \le C \, \sum_{j_1 \le 0} |G^k_{j_1}|^2 \, .
\end{equation*}
Passing to the limit and using \eqref{lem3-2}, we get $\Pi_\M^s (\underline{z},\underline{\eta}) \, \underline{W}_0 =0$, 
or in other words $\underline{W}_0 =\Pi_\M^u (\underline{z},\underline{\eta}) \, \underline{W}_0$.

$\bullet$ The sequence $(\underline{W}_{j_1})_{j_1 \le 0}$ satisfies both induction relations \eqref{induction1'} and 
\eqref{induction2'}. Due to the form of the companion matrices $\LL$ and $\M$, see \eqref{defL}-\eqref{defM}, 
we can conclude that the vector $\underline{W}_0$ belongs to the generalized eigenspace (of either $\LL$ or 
$\M$) associated with the common eigenvalues of $\M(\underline{z},\underline{\eta})$ and $\LL(\underline{z},
\underline{\eta})$. We have already seen that $\M(\underline{z},\underline{\eta})$ has no eigenvalue on 
$\cercle$ and $\underline{W}_0 =\Pi_\M^u (\underline{z},\underline{\eta}) \, \underline{W}_0$, so we can 
conclude that $\underline{W}_0$ belongs to the generalized eigenspace of $\LL$ associated with those 
common eigenvalues of $\M(\underline{z},\underline{\eta})$ and $\LL(\underline{z},\underline{\eta})$ in 
$\U$.

The matrix $\LL(\underline{z},\underline{\eta})$ has $N^u$ eigenvalues in $\U$, $N^s$ in $\D$ and $N^c$ on 
$\cercle$. (Since $\underline{z}$ may belong to $\cercle$, $N^c$ is not necessarily zero.) With obvious notations, 
we let $\Pi_\LL^{u,s,c}(z,\eta)$ denote the corresponding spectral projectors of $\LL$ for $(z,\eta)$ sufficiently 
close to $(\underline{z},\underline{\eta})$. In particular, the eigenvalues corresponding to $\Pi_\LL^u(z,\eta)$ 
lie in $\U$ uniformly away from $\cercle$ for $(z,\eta)$ sufficiently close to $(\underline{z},\underline{\eta})$. 
We can then integrate the first induction relation in \eqref{induction1'} and derive (for $k$ sufficiently large):
\begin{equation*}
\Pi_\LL^u(z^k,\eta^k) \, W_0^k =-\dfrac{1}{a_{p_1} (z^k,\eta^k)} \, \sum_{j_1 \ge 0} 
\LL(z^k,\eta^k)^{-j_1-1} \, \Pi_\LL^u(z^k,\eta^k) \, \Big( F^k_{j_1},0,\dots,0 \Big)^T \, .
\end{equation*}
Using the uniform exponential decay of $\LL(z^k,\eta^k)^{-j_1-1} \, \Pi_\LL^u(z^k,\eta^k)$ and \eqref{lem3-2}, 
we finally end up with 
\begin{equation*}
\Pi_\LL^u(\underline{z},\underline{\eta}) \, \underline{W}_0=0 \, .
\end{equation*}
Since $\underline{W}_0$ belongs to the generalized eigenspace of $\LL$ associated with those common 
eigenvalues of $\M(\underline{z},\underline{\eta})$ and $\LL(\underline{z},\underline{\eta})$ in $\U$, we can 
conclude that $\underline{W}_0$ equals zero. Applying the induction relation \eqref{induction1'}, the whole 
sequence $(\underline{W}_{j_1})_{j_1 \in \Z}$ is zero which yields the expected contradiction.
\end{proof}

The crucial property that we use in the proof of Lemma \ref{lem3} is the fact that up to $z \in \cercle$, the 
eigenvalues of $\M(z,\eta)$ lie either in $\D$ or $\U$. For the leap-frog scheme, this property would not be 
true if we had imposed the auxiliary numerical boundary condition $u_j^{n+2}+u_j^n$ rather than $2\, u_j^{n+2} 
+\lambda \, a \, (u_{j+1}^{n+1}-u_{j-1}^{n+1})$.

Let us also observe that we have used the fact that $a_{p_1}$ and $a_{-r_1}$ are nonconstant in order to 
study the induction relation \eqref{induction2}. There might be some schemes for which $a_{p_1}$ and/or 
$a_{-r_1}$ are constant but for which one can still apply similar arguments as in the previous proof, even 
though \eqref{induction2} is an induction relation with fewer steps than \eqref{induction1}. In this respect, 
Assumption \ref{assumption2} might be relaxed in specific applications.

\begin{remark}
The auxiliary problem \eqref{numabsorbing} is in general not of the same form as \eqref{numibvp} because 
in \eqref{numabsorbing} one has to impose infinitely many numerical boundary conditions. This is due to the 
fact that the stencil of $M$ incorporates points \lq \lq on the left\rq \rq $\,$ with respect to the first space 
variable. A remarkable exception occurs for explicit schemes with $s=0$, for in that case the multiplier 
$M \, v_j^n$ reads $v_j^{n+1}$ and \eqref{numabsorbing} is exactly the auxiliary problem considered 
(and labeled (2.7)) in \cite{jfcag} where one imposes Dirichlet boundary conditions on finitely many 
boundary meshes (just use $g_j^n =0$ for $j_1 \le -r_1$). The reader can then check that the energy-dissipation 
balance law of Proposition \ref{prop2} in that case ($s=0$, $Q_1 =I$) coincides exactly with the algebra involved 
in the derivation of the estimate (2.12) in \cite{jfcag}. The reader can also check that for $s=0$, and $Q_1=I$, 
Lemma \ref{lem3} becomes a rather trivial exercise...

There still remains the problem of constructing a set of dissipative numerical boundary conditions of the same 
form as \eqref{numibvp} with $s \ge 1$, that is with finitely many numerical boundary conditions, and for which 
one can prove by hand both a semigroup and a trace estimate as in Theorem \ref{absorbing}.
\end{remark}

\subsection{End of the proof}

As explained in the introduction of Section \ref{section3}, the linearity of \eqref{numibvp} reduces the proof of 
Theorem \ref{mainthm} to the case $(F_j^n)=0$, $(g_j^n)=0$, since we have already dealt with the case of zero 
initial data. We thus focus on \eqref{numibvp} with $(F_j^n)=0$ and $(g_j^n)=0$, and write the corresponding 
solution $(u_j^n)$ as $u_j^n =v_j^n +w_j^n$, where the sequence $(v_j^n)$ solves:
\begin{equation}
\label{defvjn}
\begin{cases}
L \, v_j^n =0 \, ,& j \in \Z^d \, ,\quad j_1 \ge 1\, ,\quad n\ge 0 \, ,\\
M \, v_j^n =0 \, ,& j \in \Z^d \, ,\quad j_1 \le 0\, ,\quad n\ge 0 \, ,\\
v_j^n = f_j^n \, ,& j \in \Z^d \, ,\quad n=0,\dots,s \, ,
\end{cases}
\end{equation}
and $(w_j^n)$ solves:
\begin{equation}
\label{defwjn}
\begin{cases}
L \, w_j^n =0 \, ,& j \in \Z^d \, ,\quad j_1 \ge 1\, ,\quad n\ge 0 \, ,\\
w_j^{n+s+1} +{\dps \sum_{\sigma=0}^{s+1}} B_{j_1,\sigma} \, w_{1,j'}^{n+\sigma} =\tilde{g}_j^{n+s+1} \, ,& 
j \in \Z^d \, ,\quad j_1=1-r_1,\dots,0\, ,\quad n\ge 0 \, ,\\
w_j^n = 0 \, ,& j \in \Z^d \, ,\quad n=0,\dots,s \, .
\end{cases}
\end{equation}
For $v_j^n +w_j^n$ to coincide with the solution $(u_j^n)$ to \eqref{numibvp}, it is sufficient to extend the initial 
data $f_j^0,\dots,f_j^s$ by zero for $j_1 \le -r_1$, which provides with the initial data in \eqref{defvjn} on all $\Z^d$, 
and to define the boundary source term in \eqref{defwjn} by:
\begin{equation}
\label{defgtilde}
\tilde{g}_j^{n+s+1} := -v_j^{n+s+1} -{\dps \sum_{\sigma=0}^{s+1}} B_{j_1,\sigma} \, v_{1,j'}^{n+\sigma} \, .
\end{equation}

We can estimate the solution $(v_j^n)$ to \eqref{defvjn} by applying Theorem \ref{absorbing}. In particular, 
the trace estimate:
\begin{equation*}
\sum_{n\ge 0} \Delta t \, {\rm e}^{-2\, \gamma \, n\, \Delta t} \, \sum_{j_1=1-r_1}^{P_1} 
\| v_{j_1,\cdot}^n \|_{\ell^2(\Z^{d-1})}^2 \le C \, \sum_{\sigma=0}^s \Ng f^\sigma \Nd_{1-r_1,+\infty}^2 \, ,
\end{equation*}
for $P_1=\max(p_1,q_1+1)$ gives (recall the definition \eqref{defgtilde} of $\tilde{g}_j^{n+s+1}$):
\begin{align*}
\sum_{n\ge s+1} \Delta t \, {\rm e}^{-2\, \gamma \, n\, \Delta t} \, \sum_{j_1=1-r_1}^0 
\| \tilde{g}_{j_1,\cdot}^n \|_{\ell^2(\Z^{d-1})}^2 
&\le C \, \sum_{n\ge 0} \Delta t \, {\rm e}^{-2\, \gamma \, n\, \Delta t} \, \sum_{j_1=1-r_1}^{\max(p_1,q_1+1)} 
\| v_{j_1,\cdot}^n \|_{\ell^2(\Z^{d-1})}^2 \\
&\le C \, \sum_{\sigma=0}^s \Ng f^\sigma \Nd_{1-r_1,+\infty}^2 \, .
\end{align*}
We can apply Theorem \ref{mainthm} to the solution $(w_j^n)$ to \eqref{defwjn} because the initial data 
in \eqref{defwjn} vanish. We get:
\begin{multline*}
\sup_{n \ge 0} \, {\rm e}^{-2\, \gamma \, n\, \Delta t} \, \Ng w^n \Nd_{1-r_1,+\infty}^2 
+\dfrac{\gamma}{\gamma \, \Delta t+1} \, 
\sum_{n\ge 0} \Delta t \, {\rm e}^{-2\, \gamma \, n\, \Delta t} \, \Ng w^n \Nd_{1-r_1,+\infty}^2 \\
+\sum_{n\ge 0} \Delta t \, {\rm e}^{-2\, \gamma \, n\, \Delta t} \, \sum_{j_1=1-r_1}^{p_1} 
\| w_{j_1,\cdot}^n \|_{\ell^2(\Z^{d-1})}^2 \le C \, \sum_{n\ge s+1} \Delta t \, {\rm e}^{-2\, \gamma \, n\, \Delta t} 
\, \sum_{j_1=1-r_1}^0 \| \tilde{g}_{j_1,\cdot}^n \|_{\ell^2(\Z^{d-1})}^2 \\
\le C \, \sum_{\sigma=0}^s \Ng f^\sigma \Nd_{1-r_1,+\infty}^2 \, .
\end{multline*}
Combining with the similar estimate provided by Theorem \ref{absorbing} for $(v_j^n)$, we end up with 
the expected estimate:
\begin{multline*}
\sup_{n \ge 0} \, {\rm e}^{-2\, \gamma \, n\, \Delta t} \, \Ng u^n \Nd_{1-r_1,+\infty}^2 
+\dfrac{\gamma}{\gamma \, \Delta t+1} \, 
\sum_{n\ge 0} \Delta t \, {\rm e}^{-2\, \gamma \, n\, \Delta t} \, \Ng u^n \Nd_{1-r_1,+\infty}^2 \\
+\sum_{n\ge 0} \Delta t \, {\rm e}^{-2\, \gamma \, n\, \Delta t} \, \sum_{j_1=1-r_1}^{p_1} 
\| u_{j_1,\cdot}^n \|_{\ell^2(\Z^{d-1})}^2 \le C \, \sum_{\sigma=0}^s \Ng f^\sigma \Nd_{1-r_1,+\infty}^2 \, ,
\end{multline*}
which completes the proof of Theorem \ref{mainthm}.

\section{Conclusion and perspectives}

Let us first observe that in \cite{wade}, {\sc Wade} has constructed symmetrizers for deriving stability 
estimates for multistep schemes, even in the case of variable coefficients. His conditions for constructing 
a symmetrizer are less restrictive than Assumption \ref{assumption1}. However, the symmetrizer in 
\cite{wade} is genuinely nonlocal and it is therefore not clear that it may be useful for boundary value 
problems. The main novelty here is to construct a {\it local} multiplier whose properties allow for the 
design of an auxiliary {\it dissipative} boundary value problem. This is our key to Theorem \ref{mainthm}.
\bigskip

In this article we have always discarded the dissipation term provided by the nonnegative form $D_0$. 
For the approximation of parabolic equations, this term may give some extra dissipation, but a crucial point 
to keep in mind is that the coefficients of the numerical scheme are assumed to be constant (which may in 
turn yield rather severe CFL conditions for implicit approximations of parabolic equations). Hence it does 
not seem very clear that our approach will yield stability estimates with \lq \lq optimal\rq \rq $\,$ CFL 
conditions when approximating parabolic equations. This extension is left to further study in the future.
\bigskip

The main possible improvement of Theorem \ref{mainthm} would consist of assuming that only the roots to 
\eqref{dispersion} that lie on $\cercle$ are simple. Here we have assumed that all the roots, including those 
in $\D$ are simple. If we could manage to deal with multiple roots in $\D$, then Theorem \ref{mainthm} would 
be applicable to numerical approximations of the transport equation \eqref{transport} that are based on 
Adams-Bashforth methods of order $3$ or higher (such methods have $0$ as a root of multiplicity $2$ or 
more at the zero frequency).
\bigskip

The results in this paper achieve the proof of a "weak form" of the conjecture in \cite{kreiss-wu} that strong 
stability, in the sense of Definition \ref{defstab1}, implies semigroup stability. However, an even stronger 
assumption was made in \cite{kreiss-wu}, namely that the sole fulfillment of the interior estimate
\begin{equation*}
\dfrac{\gamma}{\gamma \, \Delta t+1} \, \sum_{n\ge s+1} \Delta t \, {\rm e}^{-2\, \gamma \, n\, \Delta t} \, 
\Ng u^n \Nd_{1-r_1,+\infty}^2 \le C \, \dfrac{\gamma \, \Delta t+1}{\gamma} \, \sum_{n\ge s+1} \, \Delta t \, 
{\rm e}^{-2\, \gamma \, n\, \Delta t} \, \Ng F^n \Nd_{1,+\infty}^2 \, ,
\end{equation*}
when both the initial {\it and boundary data} for \eqref{numibvp} vanish, does imply semigroup stability. 
The analogous conjecture for partial differential equations seems to be still open so far, but we do hope 
that our multiplier technique may yield some insight for dealing with the strong form of the conjecture in 
\cite{kreiss-wu}.

\paragraph{Acknowledgments} This article was completed while the author was visiting the Institute 
of Mathematical Science at Nanjing University. The author warmly thanks Professor Yin Huicheng 
and the Institute for their hospitality during this visit.

\bibliographystyle{alpha}
\bibliography{LG}
\end{document}